\newtheorem{remark}{Remark}[section] %%%add
\newtheorem{example}{Example}[section] %%%add
\title{Fast algebraic multigrid    for   block-structured dense and Toeplitz-like-plus-Cross    systems arising from  nonlocal diffusion problems
\thanks{This work was supported by NSFC 11601206. }}
\author{Minghua Chen\thanks{Corresponding author. School of Mathematics and Statistics, Gansu Key Laboratory of Applied Mathematics and Complex Systems,
 Lanzhou University, Lanzhou 730000, P.R. China  (Email: chenmh@lzu.edu.cn)}
\and Rongjun Cao  \thanks{
School of Mathematics and Statistics, Gansu Key Laboratory of Applied Mathematics and Complex Systems,
 Lanzhou University, Lanzhou 730000, P.R. China
(Email: caorj18@lzu.edu.cn) }
 \and Stefano Serra-Capizzano \thanks{
Department of Science and High Technology, University of Insubria, Via Valleggio 11, 22100 Como, Italy $\&$ Department of Information Technology, Division of Scientific Computing, Uppsala University - ITC, L ägerhyddsv. 2, hus 2,  P.O. Box 337, SE-751 05, Uppsala, Sweden
(Email: stefano.serrac@uninsubria.it, stefano.serra@it.uu.se) }
 }
\begin{document}

\maketitle

\begin{abstract}
Algebraic multigrid (AMG) is one of the most efficient iterative methods for solving large  sparse    system of equations.
However,  how to build/check restriction and prolongation operators in practical of AMG methods for nonsymmetric {\em sparse} systems is still
an interesting open question [Brezina, Manteuffel, McCormick, Runge, and Sanders, SIAM J. Sci. Comput. (2010); Manteuffel and Southworth, SIAM J. Sci. Comput. (2019)].
This paper deals with the block-structured dense and  Toeplitz-like-plus-Cross systems, including {\em nonsymmetric} indefinite, symmetric positive definite (SPD), arising from  nonlocal diffusion  problem and peridynamic problem.
The simple (traditional)  restriction operator and prolongation operator are employed in order to handle such  block-structured dense and  Toeplitz-like-plus-Cross  systems, which  is convenient and efficient when employing a fast AMG.
We focus our efforts on  providing  the detailed proof of the convergence of the two-grid method for  such SPD situations.
The numerical experiments are performed in order to verify the convergence with a  computational cost of only  $\mathcal{O}(N \mbox{log} N)$ arithmetic operations, by using few fast Fourier transforms,  where $N$ is the number of the grid points.
To the best of our knowledge, this is the first contribution regarding Toeplitz-like-plus-Cross linear systems solved by means of a fast AMG.
\end{abstract}

\begin{keywords}
Algebraic multigrid,  nonlocal diffusion problem, peridynamic problem,   block-structured dense  system, Toeplitz-like-plus-Cross system,  fast Fourier transform
\end{keywords}

\begin{AMS}
65M55,74A70,  65T50
\end{AMS}

\pagestyle{myheadings}
\thispagestyle{plain}
\markboth{M. H. CHEN, R.  CAO, AND S. SERRA-CAPIZZANO}{FAST AMG FOR  BLOCK-STRUCTURED DENSE     SYSTEMS}

\section{Introduction}\label{sec:1}
Large, sparse, block-structured  linear systems arise in a wide variety of applications throughout computational science and engineering
involving   advection-diffusion flow \cite{Sivas2021siam}, image process \cite{NgPanWeighted2014}, Markov chains \cite{Stewart:94},  Biot's consolidation model \cite{QSW:21},
Navier-Stocks equations and   saddle point  problems \cite{BenziGolub2005}.
In this paper we study the fast algebraic multigrid  for solving the  block-structured dense  linear systems, stemming  from nonlocal problems \cite{Andreu:10,CCNWL:20,Chen:1--30,Du:19} by the piecewise quadratic polynomial collocation approximations, whose associated matrix can be expressed as a $2\times 2$ block structure
\begin{equation}\label{eq: blocksystem}
\mathcal{A}u=\left(\begin{array}{cc} A & B\\ C & D \end{array}\right)\left(\begin{array}{c} v \\ w  \end{array}\right) =\left(\begin{array}{c} b_f \\ b_g  \end{array}\right),
\end{equation}
with   the coefficient matrices $A\in \mathbb{R}^{M\times M}$,  $B\in \mathbb{R}^{M\times N}$,  $C\in \mathbb{R}^{N\times M}$ and $D\in \mathbb{R}^{N\times N}$.

Algebraic multigrid (AMG) is one of the most efficient iterative methods for solving large-scale     system of equations \cite{RugeSIAM1987,Xu:17}.
In the past decades, AMG methods for linear systems having  Toeplitz coefficient matrices with scalar entries  have been widely studied \cite{CCS:98}
including  elliptic PDEs  \cite{RugeSIAM1987,Serra:02,XuMMEPDES1996,Xu:17}, fractional PDEs \cite{CWCD:14,DMS:18,Pang:12} and nonlocal PDEs \cite{CES:20,ChenDeng2017}.
Some papers have investigated the case of block entries, where the entries are small generic matrices of fixed size instead of scalars \cite{ChenDeng2018V-cycle,WILEY2020BLOCKKt,Fiorentino:96}.
Only few papers have investigated block-structured {\em sparse} linear systems \eqref{eq: blocksystem}.
For example, setting up partial prolongations operators with a Galerkin coarse grid matrix, a new AMG approach for Stokes problem are designed in \cite{NotayStokes2016}.
Constructing  the corresponding tentative transfer operators, a fully aggregation-based AMG is developed for nonlinear contact problems of saddle point \cite{WMPGW:21}.
Using approximate ideal restriction (AIR) operators, AIR AMG method for space-time hybridizable discontinuous Galerkin discretization of advection-dominated flows are investigates \cite{Sivas2021siam}.
Defining the interpolation matrix with the coarse coefficient vector,
multilevel Markov chain Monte Carlo AMG algorithms are performed.
Involving sparse integral transfer operators, towards adaptive smoothed aggregation AMG for nonsymmetric problems are considered \cite{Bre2010aggregation}.
A transfer operator based on the  fractional approximation property, two-grid methods convergence in norm of nonsymmetric algebraic multigrid are presented \cite{Bre2010aggregation}.
However,  how to build/check restriction and prolongation operators in practical of AMG methods for nonsymmetric {\em sparse} systems is still
an interesting open question \cite{Bre2010aggregation,MaNs:19}.
In particular, how to develop/design  fast AMG for   block-structured dense  linear systems \eqref{eq: blocksystem} is still  an interesting problems, since the above  special prolongation/transfer operators are not easy to be employed in connection with the fast Fourier transform.

In this work,  the simple (traditional)  restriction operator and prolongation operator are used in order to handle such  block-structured dense   systems  \eqref{eq: blocksystem}, including  nonsymmetric indefinite system, symmetric positive definite (SPD), Toeplitz-plus-diagonal systems, which derive  from the  nonlocal problems discussed in \cite{Andreu:10,CCNWL:20,Chen:1--30,Du:19}.
In general, it is still not at all easy for dense stiffness matrices  \cite{Arico:07,Arico:04,Bolten:15,ChenDeng2017}, unless we can reduce the problem to the Toeplitz setting and we know the symbol, its zeros, and their orders  \cite{Serra:02}. Instead we will focus our attention on first answering such a question for a two-level setting, since  it is useful from a theoretical point of view as first step: in fact the study of the MGM convergence usually begins from the convergence analysis of the two-grid method (TGM) \cite{Pang:12,RugeSIAM1987,Xu:17}.
We focus our attention in providing a detailed proof of the convergence of the two-grid method (TGM) for the considered SPD linear systems.
To the best of our knowledge, this is the first time that a fast AMG is studied for  the block-structured dense linear systems as those reported in \eqref{eq: blocksystem}.

The outline of this paper is as follows.
In the next section, we introduce block-structured dense  systems including  applications in  nonlocal diffusion  problems and peridynamic problem by the piecewise quadratic polynomial collocation.
In Section 3, block-structured V-cycle AMG algorithm by Fast fourier transform for Toeplitz-like-plus-Cross systems are designed.
Convergence rate of the two-grid method is analyzed in Section 4. To show the effectiveness of the presented schemes, results of numerical experiments are reported in section 5.
Finally, we conclude the paper with some remarks and open problems.

\section{Block-structured dense  systems applications}

Nonlocal diffusion problems have been used to model very different scientific phenomena occurring in various applied fields, for example in biology, particle systems, image processing, coagulation models, mathematical finance, etc. \cite{Andreu:10,Du:19}.
Recently, the nonlocal volume-constrained diffusion problems,  the  so-called nonlocal model for distinguishing the nonlocal diffusion problems, attracted the wide interest of scientists \cite{Du:19}, where the linear scalar peridynamic model can be considered as a special case  \cite{Du:19,Silling:00}.
%Nonlocal diffusion problems have been used to model very different scientific phenomena, which can either  complement or serve as an alternative to classical  partial differential equations (PDEs).
%The integral formulations of spatial interactions  in nonlocal problems  with nonlocal Dirichlet volume constraint can naturally account for nonlocal interactions effects and allow more singular solution \cite{Du:19}.
 For example, the nonlocal peridynamic (PD) model  is becoming an attractive emerging tool for the multiscale material simulations of crack nucleation and growth, fracture, and failure of composites \cite{Silling:00}.

Let  the  piecewise quadratic base functions be defined by  \cite[p.\,37]{Aikinson:09}
\begin{equation*}
\phi_{m}(x)=\left\{\begin{array}{rl}  \frac{x-x_{m-1}}{h}\frac{2x-(x_{m-1}+x_m)}{h}, & x\in\left[x_{m-1},x_m\right],\\
 \frac{x_{m+1}-x}{h}\frac{(x_{m+1}+x_m)-2x}{h}, & x\in\left[x_m,x_{m+1}\right],\\
 0,& {\rm otherwise};
  \end{array}\right.
\end{equation*}
and
\begin{equation*}
\phi_{m-\frac{1}{2}}(x)=\left\{\begin{array}{rl}  \frac{4(x-x_{m-1})(x_m-x)}{h^2}, & x\in\left[x_{m-1},x_m\right],\\
 0, & {\rm otherwise}.
  \end{array}\right.
\end{equation*}
Then the piecewise Lagrange quadratic interpolant of $u(x)$ is
\begin{equation}\label{{ccs2.1}}
u_{Q}(x) =\sum_{j=-r}^{ N+r}u(x_j)\phi_j(x)+\sum_{j=-r}^{ N+r-1}u(x_{j+\frac{1}{2}})\phi_{j+\frac{1}{2}}(x).
\end{equation}
In this work,  we mainly focus on two types of nonlocal problems approximated  by the piecewise quadratic polynomial collocation,
which give raise to the  block-structured dense  systems expressed in \eqref{eq: blocksystem}.
\subsection{Application in  nonlocal  model }
Consider the   time-dependent  nonlocal  diffusion problem, whose  prototype
is  \cite{Andreu:10,Bates:06,CCNWL:20,Chen:1--30}
\begin{equation}\label{modelnonlocal}
\left\{ \begin{split}
 u_t - \mathcal{L} u   &=f  &  ~~{\rm on}  & ~~\Omega,\, t>0,\\
                        u(x,0)&=u_0      &  ~~{\rm on}  & ~~\Omega,
                % u&=g       &  ~~{\rm on}  & ~~ \partial\Omega,t>0.
 \end{split}
 \right.
\end{equation}
with   Dirichlet  boundary conditions.
The  nonlocal operator $\mathcal{L}$  is defined by
\begin{equation*}
\begin{split}
\mathcal{L}u(x)=\int_\Omega J(|x-y|)\left[ u(x)-u(y) \right]dy ~~\forall x \in \Omega.
\end{split}
\end{equation*}
Here $ J(x)\sim\frac{1}{x^{1+2s}},~~s\in [-0.5,1)$ is a radial probability density with a nonnegative  symmetric dispersal kernel.

We  briefly review some basic relevant notions concerning the piecewise quadratic polynomial collocation approximations
for the corresponding stationary problem
\begin{equation*}
 \int^b_a \frac{u(x)-u(y)}{|x-y|^\gamma}dy  =f(x),   \quad 0< \gamma <1,
\end{equation*}
which leads to the nonsymmetric and indefinite system  \cite{CCNWL:20,Chen:1--30}
\begin{equation}\label{3system}
\begin{split}
\mathcal{A}_hU_h=\eta_{h,\gamma}\cdot (F_h+K_h)~~ {\rm with}~~
\mathcal{A}_h=\left ( \begin{matrix}
 \mathcal{D}_{1}  & 0\\
 0      &\mathcal{D}_{2}
 \end{matrix}
 \right )
-
\left ( \begin{matrix}
  \mathcal{M}     & \mathcal{Q }  \\
 \mathcal{ P }        & \mathcal{N}
 \end{matrix}
 \right )
\end{split}
\end{equation}
and $\eta_{h,\gamma}=\frac{(3-\gamma)(2-\gamma)(1-\gamma)}{h^{1-\gamma}}$. Here the diagonal  matrices $\mathcal{D}_1$ and  $\mathcal{D}_2$  are  given by
$$\mathcal{D}_1={\rm diag}\left(d_1,d_2,\ldots, d_{N-1}\right),~ \mathcal{D}_2={\rm diag}\left(d_\frac{1}{2},d_\frac{3}{2},\ldots, d_{N-\frac{1}{2}}\right)$$
with
\begin{equation*}
\begin{split}
 d_{\frac{i}{2}} = (3-\gamma)(2-\gamma)\left( \left(\frac{i}{2}\right)^{1-\gamma} + \left(N-\frac{i}{2}\right)^{1-\gamma} \right), \quad i=1,2,\cdots,2N-1.
\end{split}
\end{equation*}
The square matrices $\mathcal{M}$, $\mathcal{N}$ and {\em rectangular matrices} $\mathcal{P}$, $\mathcal{Q}$  are, respectively, defined by
\begin{equation*}
\begin{split}
\mathcal{M}&={\rm toeplitz}\left(m_0,m_1,\ldots, m_{N-2}\right),~~\mathcal{N}={\rm toeplitz}\left(n_0,n_1,\ldots, n_{N-1}\right),\\
\mathcal{P}&={\rm toeplitz}([p_{0},p_{0},p_{1},\dots,p_{N-2}],[p_{0},p_{1},\dots,p_{N-2}]),\\
\mathcal{Q}&={\rm toeplitz}([q_{0},q_{1},\dots,q_{N-2}],[q_{0},q_{0},q_{1},\dots,q_{N-2}]).
\end{split}
\end{equation*}
The corresponding  coefficients are computed by $m_0=2(1+\gamma)$, $n_{0}={ (2-\gamma) 2^{\gamma+1}}$ and
\begin{equation*}
\begin{split}
 p_0&=4\left[\left(\frac{3}{2}\right)^{3-\gamma}-\left(\frac{1}{2}\right)^{3-\gamma}\right]-(3-\gamma)\left[\left(\frac{3}{2}\right)^{2-\gamma}+3\left(\frac{1}{2}\right)^{2-\gamma}\right],\\
 q_{k}&=-8\left((k+1)^{3-\gamma}-k^{3-\gamma}\right)+4(3-\gamma)\left((k+1)^{2-\gamma}+k^{2-\gamma}\right),\, k \ge 0,\\
  m_k&=4\left[ (k+1)^{3-\gamma}\!-\!(k-1)^{3-\gamma} \right]
-(3-\gamma)\left[ (k+1)^{2-\gamma}+6k^{2-\gamma}+(k-1)^{2-\gamma}  \right],~k\geq 1
\end{split}
\end{equation*}
with $p_{k}=m_{k+\frac{1}{2}}$, $n_{k}=q_{k-\frac{1}{2}}$ for $k\geq 1$.

The boundary data $K$ is given by
\begin{equation*}
\begin{split}
  K&=\left(\eta_{1},\eta_{2},\cdots,\eta_{N-1},\eta_\frac{1}{2},\eta_\frac{3}{2},\cdots,\eta_{N-\frac{1}{2}}\right)^{T}u_0\\
&\quad+\left(\eta_{N-1},\eta_{N-2},\cdots,\eta_{1},\eta_{N-\frac{1}{2}},\eta_{N-\frac{3}{2}},\cdots,\eta_{\frac{1}{2}}\right)^{T}u_N
\end{split}
\end{equation*}
with $\eta_{\frac{1}{2}}={{(2-\gamma)(1-\gamma)} 2^{\gamma-1}}$ and
$$\eta_{i/2}
= 4\left[ i^{3-\gamma} \!- \!(i-1)^{3-\gamma}  \right]
-(3-\gamma) \left[ 3i^{2-\gamma} + \left(i-1\right)^{2-\gamma} - (2-\gamma)i^{1-\gamma}\right],~~2\leq i\leq 2N-1.$$

\subsection{Application in  Peridynamic  model}
Let us consider the following  time-dependent  peridynamic/nonlocal volume-constrained diffusion problem \cite{ChenDeng2017,Du:19,Silling:00}
\begin{equation} \label{ccs2.4}
\left\{ \begin{split}
 u_t - \mathcal{L}_\delta u   &=f  &  ~~{\rm on}  & ~~\Omega,\, t>0,\\
                        u(x,0)&=u_0      &  ~~{\rm on}  & ~~\Omega \cup \Omega_\mathcal{I},\\
                 u&=g       &  ~~{\rm on}  & ~~ \Omega_\mathcal{I},t>0.
 \end{split}
 \right.
\end{equation}
The  nonlocal operator $\mathcal{L}_\delta$  is defined by \cite{Du:19}
\begin{equation*}
\begin{split}
\mathcal{L}_\delta u(x)=\int_{B_\delta(x)} \gamma_\delta(|x-y|) \left[u(y)-u(x)\right]dy\ \  ~~\forall x \in \Omega
\end{split}
\end{equation*}
with $B_\delta(x)=\{y \in \mathbb{R}: |y-x|<\delta \}$ denoting a neighborhood centered at $x$ of radius $\delta$,
 which is the horizon parameter and  represents the size of nonlocality;
 the symmetric nonlocal kernel is defined as $\gamma_\delta(|x-y|)=0$  if $y \not \in B_\delta(x)$.

Before we start to discuss this problem we shall briefly review few preliminary notions regarding the piecewise quadratic polynomial collocation approximations for the corresponding stationary problem
\begin{equation} \label{ccs2.5}
\left\{ \begin{split}
  - \mathcal{L}_\delta u      &=f ~~{\rm on}   ~~\Omega,\\
                 u&=g       ~~\,  {\rm on}   ~~ \Omega_\mathcal{I},
 \end{split}
 \right.
\end{equation}
where $u=g $ denotes a  volumetric constraint imposed on a volume $\Omega_\mathcal{I}$ that has
a nonzero volume and is made to be disjoint
 from $\Omega$. In order to keep the expression simple, below we assume the unit interval $\Omega$ with the volumetric constraint domain $\Omega_\mathcal{I}=[-\delta,0]\cup[1,1+\delta]$,
but everything can be shifted to an arbitrary interval $[a,b]$.
For convenience,  we focus on the special case where the kernel $\gamma_\delta(s)$ is taken to be a constant, i.e., $\gamma_\delta(s)=3\delta^{-3}$ \cite{ChenDeng2017,Du:19,Tian:13}. More general kernel types \cite{Du:19,Tian:13}  can be studied in a similar manner.

Now, we introduce and discuss the discretization scheme of (\ref{ccs2.5}). Let
the ratio $r=\big\lfloor\delta/h\big\rfloor\ge1$ if $\delta\geq  h$ and $r=\big\lceil\delta/h\big\rceil=1$
if $\delta\le h$.
Let the mesh points $x_i=ih$, $h=1/N$,  $i \in \mathcal{N}$ and
\begin{equation*}
\begin{split}
  &\mathcal{N}=\left\{-r,{-r+\frac{1}{2}},\ldots,0,\frac{1}{2},\ldots,{N-\frac{1}{2}},{N},\ldots,{N+r-\frac{1}{2}},N+r\right\},\\
  &\mathcal{N}_{in}=\left\{{\frac{1}{2}},1,\frac{3}{2}, \ldots, N-\frac{1}{2}\right\},~~\mathcal{N}_{out}=\mathcal{N}\setminus\mathcal{N}_{in}.
\end{split}
\end{equation*}
Let $u_i$ be the approximated
value of $u(x_i)$ and $f_{i} = f(x_i)$.
Denote $I_m=\left[(m-1)h,mh\right]$, $1\leq m\leq r$, and $I_{r+1}=[rh,\delta]$.

\subsubsection{ Nonsymmetric indefinite   block-structured dense  systems}

 By the piecewise quadratic polynomial collocation \eqref{{ccs2.1}},
it is easy to check that the standard collocation method  of stationary problem \eqref{ccs2.5} has the following form \cite{ChenShi2021}
\begin{equation}\label{ccs2.6}
\left\{\begin{split}
-\mathcal{L}_{\delta} u_i &=f_{i}, ~~i \in \mathcal{N}_{in},\\
                  u_i                  &=g_i,      ~~i \in \mathcal{N}_{out}.
\end{split}\right.
\end{equation}

For convenience of implementation, we use the matrix form of the grid functions
\begin{equation*}
\begin{split}
U_{h}=[u_{1},u_{2},\ldots,u_{N-1},u_{\frac{1}{2}},u_{\frac{3}{2}},\ldots,u_{N-\frac{1}{2}}]^T~~~{\rm and}~~~
F_{h}=[f_{1},f_{2},\ldots,f_{N-1},f_{\frac{1}{2}},f_{\frac{3}{2}},\ldots,f_{N-\frac{1}{2}}]^T.
\end{split}
\end{equation*}

We next deal with the boundary conditions to ensure that the proposed fast AMG will have a $\mathcal{O}(N \mbox{log} N)$ complexity.
Let  the components  of  the left boundary ${}^Lg^{v}$ (or right boundary ${}^Rg^{v}$) at integer nodes  and
the left boundary ${}^Lg^{w}$ (or right boundary ${}^Rg^{w}$) at semi-integer nodes be
\begin{equation*}
\begin{split}
{}^Lg^{v}=&\left(g_{-r},g_{-r+1},\ldots,g_{-1},g_0\right)^T, ~~{}^Rg^{v}=\left(g_{N},g_{N+1},\ldots,g_{N+r}\right)^T,\\ {}^Lg^{w}=&\left(g_{-r+\frac{1}{2}},g_{-r+\frac{3}{2}},\ldots,g_{-\frac{1}{2}}\right)^T,~~{}^Rg^{w}=\left(g_{N+\frac{1}{2}},g_{N+\frac{3}{2}},\ldots,g_{N+r-\frac{1}{2}}\right)^T.
\end{split}
\end{equation*}
Let
$F^v= [f_{1},f_{2},\ldots,f_{N-1}]^T$, $F^w =[f_{\frac{1}{2}},f_{\frac{3}{2}},\ldots,f_{N-\frac{1}{2}}]^T$ and
\begin{equation*}
\begin{split}
w^{A}=&\left(a_{1},a_{2},\ldots,a_{r},0\right)^T, ~~w^{B}=\left(a_{\frac{3}{2}},a_{\frac{5}{2}},\ldots,a_{r-\frac{1}{2}},0\right)^T,\\ w^{C}=&\left(c_{0},c_{1},\ldots,c_{r}\right)^T,~~w^{D}=\left(d_{1},d_{2},\ldots,d_{r}\right)^T.
\end{split}
\end{equation*}
Denote
%Sequentially, denote $ F_h=\left[\begin{array}{c} F^v\\F^w \end{array}\right]$, with $F^v= [f_{1},f_{2},\ldots,f_{N-1}]^T$, $F^w =[f_{\frac{1}{2}},f_{\frac{3}{2}},\ldots,f_{N-\frac{1}{2}}]^T$, we get the boundary auxiliary vector of $F^v$ for $j=1,\dots,r$,
\begin{equation*}
{}^LF^v_{j} = \sum^{r+1}_{i=j} {}^Lg^{v}_{i}w^{A}_{r+1+j-i} + \sum^{r}_{i=j} {}^Lg^{w}_{i}w^{B}_{r+j-i},
~~~{}^RF^v_{j} = \sum^{r+2-j}_{i=1} {}^Rg^{v}_{i}w^{A}_{j-1+i} + \sum^{r+1-j}_{i=1} {}^Rg^{w}_{i}w^{B}_{j-1+i}
\end{equation*}
and
\begin{equation}\label{eq:rightside1}
F^v_{j}  =  F^v_{j} - {}^LF^v_{j},
~~~F^v_{N-j} = F^v_{N-j} - {}^RF^v_{j},~~j=1,\dots,r.
\end{equation}
Let
\begin{equation*}
\begin{split}
{}^LF^w_{j} &= \sum^{r+1}_{i=j} {}^Lg^{ v}_{i}w^{ C}_{r+1+j-i} + \sum^{r}_{i=j} {}^Lg^{ w}_{i}w^{
D}_{r+j-i},~
~~{}^LF^w_{r+1} = {}^Lg^{ v}_{r+1}w^{ C}_{r+1},\\
{}^RF^w_{j}& = \sum^{r+2-j}_{i=1} {}^Rg^{ v}_{i}w^{ C}_{j-1+i} + \sum^{r+1-j}_{i=1} {}^Rg^{ w}_{i}w^{D}_{j-1+i},~
~~{}^RF^w_{r+1} =  {}^Rg^{v}_{1}w^{ C}_{r+1}
\end{split}
\end{equation*}
and
\begin{equation}\label{eq:rightside2}
F^w_{j}  =  F^w_{j} - {}^LF^w_{j},
~~~F^w_{N-j} = F^w_{N-j} - {}^RF^w_{j},~~ j=1,\dots,r+1.
\end{equation}
From \eqref{eq:rightside1} and \eqref{eq:rightside2},
 the numerical scheme \eqref{ccs2.6} can be recast as \cite{CQS:20,ChenShi2021}
\begin{equation}\label{eq:nonsym system}
\mathcal{A}^N_{h}U_h = \eta_h\cdot F^N_{h}=\left[\begin{array}{c} F^v\\F^w \end{array}\right]~~
{\rm with}~~\mathcal{A}^N_{h} = \left[\begin{array}{cc} A & B\\ C & D \end{array}\right], \eta_h=2\delta^3/h.
\end{equation}
Note that   the above Toeplitz matrices $A\in \mathbb{R}^{{(N-1)}\times (N-1)}$, $B\in \mathbb{R}^{{(N-1)}\times N}$, $C\in \mathbb{R}^{{N}\times (N-1)}$ and
$D\in \mathbb{R}^{{N}\times N}$  are defined by
\begin{equation*}
\begin{split}
A&= {\rm toeplitz}\left(a_0,a_1,a_2,\cdots,a_{r},\textbf{0}_{1\times (N-r-2)}\right),\\
B &= {\rm toeplitz}\left(\left[a_{\frac{1}{2}},a_{\frac{3}{2}},\cdots,a_{r-\frac{1}{2}},\textbf{0}_{1 \times (N-1-r)}\right],
\left[a_{\frac{1}{2}},a_{\frac{1}{2}},a_{\frac{3}{2}},\cdots,a_{r-\frac{1}{2}},\textbf{0}_{1\times (N-r-1)}\right]\right),\\
C& = {\rm toeplitz}\left([c_0,c_0,c_1,c_2,\cdots,c_{r},\textbf{0}_{1\times (N-r-2)}],[c_0,c_1,c_2,\cdots,c_{r},\textbf{0}_{1\times(N-r-2)}]\right),\\
 D &= {\rm toeplitz}\left([d_0,d_1,d_2,\cdots,d_{r},\textbf{0}_{1\times(N-r-1)}]\right)
 \end{split}
\end{equation*}
with the coefficients
\begin{equation*}
\begin{split}
 & a_{0}=12r-2, \quad a_{m}=-2, \quad a_{r}=-1, ~~1\leq m\leq r-1,\\
 & a_{m+\frac{1}{2}}=-4, ~~0\leq m\leq r-1,
\end{split}
\end{equation*}
and
\begin{equation*}
\begin{split}
&c_{m}=-2, \quad c_{r-1}=-\frac{9}{4}, \quad c_{r}=\frac{1}{4}, ~~0\leq m\leq r-2,\\
&d_{0}=12r-4, \quad d_{m}=-4,  \quad d_{r}=-2, ~~1\leq m\leq r-1.\\
\end{split}
\end{equation*}

\subsubsection{Symmetric positive definite   block-structured dense  systems}
We observe that the discrete maximum principle is not satisfied for the above nonsymmetric indefinite system \eqref{eq:nonsym system},
which might be trickier for the stability analysis of the high-order numerical schemes \cite{DDGGTZ:20,LTTF:21}.
As a consequence, the shifted-symmetric piecewise quadratic polynomial collocation method for peridynamic /nonlocal model \eqref{ccs2.5}
has been considered in \cite{CQS:20,ChenShi2021}, which satisfies  the discrete maximum principle and has the symmetric positive definite   block-structured dense  systems.
Namely, the  shifted-symmetric  system of \eqref{ccs2.6} can be recast as
\begin{equation}\label{eq: system1}
\mathcal{A}^S_{h}U_h = \eta_h\cdot F_{h}^S~~{\rm with}~~\mathcal{A}^S_{h} = \left[\begin{array}{cc} A & B\\ B^T & \widehat{A} \end{array}\right], \eta_h=2\delta^3/h.
\end{equation}
Here the  function  $F^{S}_{h}$ is computed as done in   \eqref{eq:rightside1} and \eqref{eq:rightside2},
the Toeplitz matrices $A\in \mathbb{R}^{{(N-1)}\times (N-1)}$, $B\in \mathbb{R}^{{(N-1)}\times N}$, and
$\widehat{A}\in \mathbb{R}^{{N}\times N}$  are defined by
\begin{equation*}
\begin{split}
A &=  {\rm toeplitz}(a_0,a_1,a_2,\cdots,a_{r},\textbf{0}_{1\times(N-r-2)}),\\
B &= {\rm toeplitz}\left([a_{\frac{1}{2}},a_{\frac{3}{2}},\cdots,a_{r-\frac{1}{2}},\textbf{0}_{1\times(N-1-r)}], [a_{\frac{1}{2}},a_{\frac{1}{2}},a_{\frac{3}{2}},\cdots,a_{r-\frac{1}{2}},\textbf{0}_{1\times(N-r-1)}]\right),\\
\widehat{A} &=  {\rm toeplitz}([a_0,a_1,a_2,\cdots,a_{r},\textbf{0}_{1\times(N-r-1)}])
\end{split}
\end{equation*}
with
\begin{equation}\label{coeeq: system1}
\begin{split}
 & a_{0}=12r-2, \quad a_{m}=-2, \quad a_{r}=-1,~~1\leq m\leq r-1,\\
 & a_{m+\frac{1}{2}}=-4, ~~0\leq m\leq r-1.
\end{split}
\end{equation}

\section{Fast AMG for block-structured dense  systems}\label{sec:03}
Multigrid methods  are among the most efficient iterative methods for solving large scale systems of equations  \cite{RugeSIAM1987,Xu:17}.
However,  it is not easy to build restriction and prolongation operators when using AMG methods for nonsymmetric {\em sparse} systems  \cite{Bre2010aggregation,MaNs:19}.
To the best of our knowledge, there is no  fast AMG for block-structured dense  linear systems of the type in \eqref{eq: blocksystem},
since the   special prolongation/transfer operators are not easy to be employed with fast Fourier transform.
Here,  the simple (traditional) transfer operator are employed to handle such  block-structured dense  systems
 to ensure a fast AMG showing a $\mathcal{O}(N \mbox{log} N)$ complexity.

\subsection{Multigrid methods}
Let us firstly review the basic multigrid technique when applied to a scalar algebraic linear system, having in mind that our target is the efficient solution of the block-structured dense linear systems as those reported in \eqref{eq: blocksystem}.
Let the finest mesh points $x_i=a+ih$, $h=(b-a)/N$, $N=2^K$ with $\Omega = (a,b)$.
Define the multiple level of grids
\begin{equation*}
  \mathfrak{B}^k = \left\{x^k_{i}=a+\frac{i}{2^{k}}(b-a), i = 1:N_k \right\}~{\rm with}~ N_k = 2^k-1, k = 1:K,
\end{equation*}
where $\mathfrak{B}^k$ represents not only the grid with grid spacing $h_k=2^{K-k}h$, but also the space of vectors defined on that grid.

Given a algebraic system
 \begin{equation}\label{eq:specialsystem}
A_hu^h=b^h.
\end{equation}
We define a sequence of subsystems on different levels
$$A_ku^k=b^k, u^k\in \mathfrak{B}^k,  \quad k=1{,\ldots,}K.$$
Here $K$ is the total number of levels, with $k=K$ being the finest level, i.e., $A_K=A_h$.

The traditional  restriction operator $I^{k-1}_k:\mathds{R}^{N_{k}}\rightarrow \mathds{R}^{N_{k-1}}$ and prolongation operator $I^k_{k-1}:\mathds{R}^{N_{k-1}}\rightarrow \mathds{R}^{N_{k}}$
 are defined by \cite[pp. 438-454]{SaadInterativeSiam2003}
\begin{equation*}
  I_k^{k-1} = \frac{1}{4}\left[\begin{array}{cccccccc}
    1 & 2 & 1 &             &             &                &       &     \\
      &    & 1 & 2          & 1           &                &        &      \\
      &    &    &            & \cdots     &  \cdots     &        &       \\
      &    &    &             &             &1              &2       &1
  \end{array}\right]_{N_{k-1}\times N_k} \quad  {\rm and}\quad     I^k_{k-1} =2\left(I_k^{k-1}\right)^{T},
\end{equation*}
which  should be  convenient and efficient  for  block-structured dense  linear systems as those in \eqref{eq: blocksystem}, using AMG and fast Fourier transforms.
Let
\begin{equation*}
  \nu^{k-1} = I_k^{k-1}\nu^k~~{\rm with}~~\nu^{k-1} = \frac{1}{4}\left(\nu^k_{2i-1} + 2\nu^{k}_{2i} + \nu^{k}_{2i+1}\right),~i = 1:N_k,
\end{equation*}
and
\begin{equation*}
   \nu^{k} = I^k_{k-1}\nu^{k-1}.
\end{equation*}
It may be more useful to define the linear system by Galerkin projection in the AMG method, where the coarse grid problem is defined by
\begin{equation}\label{eq:5 Galerkin}
A_{k-1} = I_k^{k-1}A_k I^k_{k-1},
\end{equation}
and the intermediate $\left(k,k-1\right)$ coarse grid correction operator is
 \begin{equation*}\label{eq:6 correct}
   T_k =I_k - I^k_{k-1}A_{k-1}^{-1}I^{k-1}_kA_k.
 \end{equation*}
 We  choose  the damped Jacobi iterative operator
\begin{equation}\label{eq:7 Jacobi}
  S_k = I - \omega D_k^{-1}A_k\
\end{equation}
with a weighting  factor $\omega$, and $D_k$ being the  diagonal of $A_k$.

A multigrid process is intrinsically to define a sequence of operators $B_k:\mathfrak{B}^k\rightarrow\mathfrak{B}^k$, which is an approximate inverse of $A_k$ in the sense that  $\left|\left|I - B_kA_k \right|\right|$ is small, bounded away from one. The V-cycle Multigrid Algorithm  \ref{Algorithm1:V-cycle} can be seen  in \cite{XuMMEPDES1996}.
If  $k=2$, the resulting  Algorithm  \ref{Algorithm1:V-cycle} is two-grid method (TGM).
\begin{algorithm}[!ht]
\caption{ V-cycle Multigrid Algorithm: Define $B_1=A_1^{-1}$. Assume that $B_{k-1}:\mathfrak{B}^{k-1}\mapsto \mathfrak{B}^{k-1}$ is defined.
We shall now define $B_k:\mathfrak{B}^{k}\mapsto \mathfrak{B}^{k}$ as an approximate iterative solver for the equation associated with $A_ku^k=b^k$.}
\label{Algorithm1:V-cycle}
\begin{algorithmic}[1]
\STATE Pre-smooth: Let $S_{k,\omega}$ be defined by \eqref{eq:7 Jacobi} and   $u^k_0=\mathbf{0}$, for $l = 1:m_1$,
\begin{equation*}
u^k_l = u^k_{l-1} + S_{k,\omega_{pre}}(b^k - A_ku^k_{l-1})
\end{equation*}\\[1mm]
\STATE Coarse grid solution: Denote $e^{k-1}\in \mathfrak{B}^{k-1}$ as the approximate solution of the residual equation $A_{k-1}e = I^{k-1}_k(b^k - A_ku^k_{m_1})$ with the iterator $B_{k-1}$,
\begin{equation*}
  e^{k-1} =  B_{k-1}I^{k-1}_k(b^k - A_ku^k_{m_1})
\end{equation*}\\[1mm]
\STATE Correction: $u^k_{m_1+1} = u^k_{m_1} + I^k_{k-1}e^{k-1}$,
\STATE Post-smooth: for $l = m_1 + 2 :m_1 + m_2$,
\begin{equation*}
  u^k_l = u^k_{l-1} + S_{k,\omega_{post}}(b^k - A_ku^k_{l-1})
\end{equation*}\\[1mm]
\STATE Define: $B_kb^k = u^k_{m_1+ m_2}$.
\end{algorithmic}
\end{algorithm}

The basic AMG  idea for solving  the block-structured dense  linear systems in \eqref{eq: blocksystem} is the same as in the scalar case \eqref{eq:specialsystem}.
Define a sequence of block-structured subsystems on different levels
$$\mathcal{A}_ku^k=b^k, ~~  u^k\in \mathfrak{M}^k, \quad k=1{,\ldots,}K$$
with the multiple level of grids
\begin{equation*}
  \mathfrak{M}^k = \left\{x^k_{i/2}=a+\frac{i/2}{2^{k}}(b-a), i = 1:2N_k+1 \right\}~{\rm with}~ N_k = 2^k-1, k = 1:K.
\end{equation*}
Here
\begin{equation*}
\mathcal{A}_k=\left[\begin{array}{cc} A^{(k)} & B^{(k)}\\ C^{(k)} & D^{(k)} \end{array}\right],~
u^k=\left[\begin{array}{c} v^k\\ w^k   \end{array}\right],~
b^k=\left[\begin{array}{c} b_f^k \\ b_g^k  \end{array}\right],
\end{equation*}
 and $\mathcal{D}_k$ is the diagonal matrix of $\mathcal{A}_k$.
The block-structured dense V-cycle Multigrid method  is developed in Algorithm \ref{Algorithm2:structured V-cycle}.
\begin{algorithm}[!ht]
\caption{Block-structured dense V-cycle Multigrid method: Define $\mathcal{B}_1=\mathcal{A}_1^{-1}$. Assume that $\mathcal{B}_{k-1}:\mathfrak{M}^{k-1}\mapsto \mathfrak{M}^{k-1}$ is defined.
We shall now define $\mathcal{B}_k:\mathfrak{M}^{k}\mapsto \mathfrak{M}^{k}$ as an approximate iterative solver for the equation associated with $\mathcal{A}_ku^k=b^k$.}
\label{Algorithm2:structured V-cycle}
\begin{algorithmic}[1]
\STATE  Pre-smooth: Let $\mathcal{S}_{k,\omega}$ be defined by $\mathcal{S}_k = I - \omega \mathcal{D}_k^{-1} \mathcal{A}_k$ and
$\left[\begin{array}{c} v^k_0 \\ w^k_0   \end{array}\right]=\mathbf{0}$, for $l = 1:m_1$,
\begin{equation*}
\left[\begin{array}{c} v^k_l \\ w^k_l   \end{array}\right]
=\left[\begin{array}{c} v^k_{l-1} \\ w^k_{l-1}   \end{array}\right]
+ \mathcal{S}_{k,\omega_{pre}}\left(\left[\begin{array}{c} b_f^k \\ b_g^k  \end{array}\right] - \mathcal{A}_k\left[\begin{array}{c} v^k_{l-1} \\ w^k_{l-1}   \end{array}\right] \right)
\end{equation*}
\STATE  Coarse grid solution: Denote $ \mathbf{e}^{k-1}=\left[\begin{array}{c} e^{k-1}_v \\ e^{k-1}_w  \end{array}\right] \in \mathfrak{M}_{k-1}$ as the approximate solution of the residual equation
 $\mathcal{A}_{k-1}\mathbf{e} = I^{k-1}_k\left(\left[\begin{array}{c} b_f^k \\ b_g^k  \end{array}\right] - \mathcal{A}_k\left[\begin{array}{c} v^k_{m_1} \\ w^k_{m_1}   \end{array}\right]\right)$ with the iterator $\mathcal{B}_{k-1}$ an approximate inverse of $\mathcal{A}_{k-1}$,
\begin{equation*}
  \left[\begin{array}{c} e^{k-1}_u \\ e^{k-1}_v  \end{array}\right] =  \mathcal{B}_{k-1}I^{k-1}_k\left(\left[\begin{array}{c} f_k \\ g_k  \end{array}\right] - \mathcal{A}_k\left[\begin{array}{c} u^k_{m_1} \\ v^k_{m_1}   \end{array}\right]\right)
\end{equation*}
\STATE  Correction: $\left[\begin{array}{c} v^k_{m_1+1} \\ w^k_{m_1+1}   \end{array}\right]
 = \left[\begin{array}{c} v^k_{m_1} \\ w^k_{m_1}   \end{array}\right] + I^k_{k-1} \left[\begin{array}{c} e^{k-1}_v \\ e^{k-1}_w  \end{array}\right] $,
\STATE  Post-smooth: for $l = m_1 + 2 : m_1 + m_2$,
\begin{equation*}
  \left[\begin{array}{c} v^k_{l} \\ w^k_{l}   \end{array}\right] = \left[\begin{array}{c} v^k_{l-1} \\ w^k_{l-1}   \end{array}\right]
  + \mathcal{S}_{k,\omega_{post}}\left(\left[\begin{array}{c} b_f^k \\ b_g^k  \end{array}\right]
   - \mathcal{A}_k\left[\begin{array}{c} v^k_{l-1} \\ w^k_{l-1}   \end{array}\right]\right)
\end{equation*}
\STATE   Define: $\mathcal{B}_k\left[\begin{array}{c} b_f^k \\ b_g^k   \end{array}\right]  = \left[\begin{array}{c} v^k_{m_1 + m_2} \\ w^k_{m_1 + m_2}   \end{array}\right]$.
\end{algorithmic}
\end{algorithm}

\subsection{Fast Fourier transform  for block-structured dense  systems}
We first review the Toeplitz matrix and the circulant matrix by fast Fourier transform with $\mathcal{O}(n \mbox{log} n)$  complexity, which will be used later.
Let  $n \times n$ Toeplitz  matrix $T_n(c)$ be  defined by  \cite{Bottcher:05,Chan:07}
\begin{equation*}
T_n(c):=[c_{j-k}]_{j,k=1}^n=\left [ \begin{matrix}
                      c_0           &      c_{-1}             &      \cdots         &       c_{-(n-1)}       \\
                      c_{1}         &      c_{0}              &      \cdots         &       c_{-(n-2)}        \\
                     \vdots         &      \vdots             &      \ddots         &        \vdots            \\
                     c_{n-1}        &      c_{n-2}            &      \cdots         & c_{0}
 \end{matrix}
 \right ],
\end{equation*}
and the circulant matrix be the periodic cousin of Toeplitz matrix. Denote the circulant matrix $C_n$
whose first column is $\widetilde{c}=(c_0,c_1,\dots,c_{n-1})^{\rm T}$, namely,
\begin{equation*}
C_n:=\left [ \begin{matrix}
                      c_0           &      c_{n-1}      &      c_{n-2}       &      \cdots       &       c_2   &       c_1       \\
                      c_{1}         &      c_{0}        &      c_{n-1}       &      \cdots       &       c_3   &       c_{2}      \\
                      c_{2}         &      c_{1}        &      c_{0}         &      \ddots       &      \ddots &       c_{3}       \\
                      \vdots        &      \vdots       &      \ddots        &      \ddots       &      \ddots &       \vdots       \\
                      c_{n-2}       &      c_{n-3}      &      \ddots        &      \ddots       &       c_{0} &       c_{n-1}       \\
                      c_{n-1}       &      c_{n-2}      &      c_{n-3}       &      \cdots       &       c_{1} &       c_{0}
 \end{matrix}
 \right ].
\end{equation*}
Moreover, we set $\omega_n=\mbox{exp}(2\pi i/n)$ and  define the unitary Fourier matrix
\begin{equation*}
F_n=\frac{1}{\sqrt{n}}\left [ \begin{array}{llllr}
                      1          &          1           &               1       &      \cdots         &               1             \\
                      1          &      \omega_n        &      \omega_n^{2}     &      \cdots         &       \omega_n^{n-1}         \\
                      1          &      \omega_n^{2}    &      \omega_n^{4}     &      \cdots         &       \omega_n^{2(n-1)}       \\
                      \vdots     &      \vdots          &      \vdots           &      \ddots         &        \vdots                  \\
                      1          &      \omega_n^{n-1}  &      \omega_n^{2(n-1)}&      \cdots         &        \omega_n^{(n-1)(n-1)}
\end{array}
 \right ]
\end{equation*}
with $i$ being the imaginary unit.
Therefore, a circulant matrix can be diagonalized by the  Fourier matrix $F_n$, i.e.,
\begin{equation*}
C_n=F_n^{-1}\mbox{diag}(\sqrt{n}F_n\widetilde{c} )F_n=F_n^{*}\mbox{diag}(\sqrt{n}F_n\widetilde{c} )F_n,
\end{equation*}
where  $\mbox{diag}(\sqrt{n}F_n\widetilde{c} )$  is a diagonal matrix holding the eigenvalues of $C_n$.
Moreover, for any given vector $v$, we can determine \cite{ChanNg:96}
\begin{equation*}
C_nv={\rm ifft}\left( {\rm fft}(\widetilde{c}).*{\rm fft}(v)   \right)
\end{equation*}
with $\mathcal{O}(n \mbox{log} n)$ operations by the fast Fourier transform (FFT) of the first column $\widetilde{c} $ of $C_n$.

Then, for any $n$-by-$1$   vector $\bf x$, the multiplication $T_n\bf x$ can also be computed by FFTs with  the
computational count of $\mathcal{O}\left(n \log  n\right)$ arithmetic operations \cite[p.\,12]{Chan:07}.
More concretely, we take a $2n$-by-$2n$
circulant matrix $C_n$ with $T_n$ embedded inside as follows:
\begin{equation}\label{ccs3.4}
\left[\begin{array}{cc}T_{n} & \ast\\ \ast & T_{n}\end{array}\right]\left[\begin{array}{c}\bf x\\ \bf 0\end{array}\right]=\left[\begin{array}{c}T_{n}\bf x\\ \ddag\end{array}\right]
~~~~{\rm with}~~C_n=\left[\begin{array}{cc}T_{n} & \ast\\ \ast & T_{n}\end{array}\right].
\end{equation}
\subsubsection{Fast Fourier transform  for block-structured dense  systems at the finest level}\label{ssub1}
Let us  consider  the fast Fourier transform algorithm   for block-structured dense  systems \eqref{eq: blocksystem} at the finest level, namely,
\begin{equation}\label{ccs3.5}
\mathcal{A}_hu=\left(\begin{array}{cc} A & B\\ C & D \end{array}\right)\left(\begin{array}{c} v \\ w  \end{array}\right)=\left(\begin{array}{c} Av+Bw \\ Cv+Dw  \end{array}\right)
\end{equation}
with    Toeplitz matrices $A\in \mathbb{R}^{M\times M}$,  $B\in \mathbb{R}^{M\times N}$,  $C\in \mathbb{R}^{N\times M}$ and $D\in \mathbb{R}^{N\times N}$, $M<N$.

It is well known that most of the early works on matrix-vector multiplication with Toeplitz algebraic system  were focused on squared matrices by Fast fourier transform (FFT) \cite{Chan:07,ChanNg:96}.
However, the considered technique can not be  directly applied  to block-structured dense systems of the form \eqref{ccs3.5}, since the related structures contain  rectangular the matrices $B$ and $C$.

We next design/develop the FFT algorithm  for the rectangular matrices $B$ and $C$ in \eqref{ccs3.5} based on the idea of \cite{CCNWL:20,Chan:07}, which  realizes  the computational count of $\mathcal{O}(N \log  N)$ arithmetic operations and the required storage $\mathcal{O}(N)$.
Let the rectangular matrix $B\in \mathbb{R}^{M\times N}$  be given  by
\begin{equation*}
\begin{split}
B=\left [ \begin{matrix}
b_{0}               & b_{1}              & \cdots            &  b_{J}          & \cdots      &  \cdots              & \cdots            & b_{N-1}      \\
b_{-1}               & b_{0}            & b_{1}             &     \ddots       & \ddots      &                &             & \vdots              \\
b_{ -2}              & b_{-1}           & b_{0}             &     \ddots       & \ddots      & \ddots      &               &  \vdots       \\
\vdots               & \ddots             &  \ddots           &     \ddots       & \ddots      &  \ddots      &  \ddots               & \vdots     \\
b_{-M+1}          &\cdots              & b_{ -2}            & b_{-1}            & b_{0}      & b_{1}        &  \cdots     &   b_{J}
 \end{matrix}
 \right ]_{M \times N}~~{\rm with}~~J=N-M.
\end{split}
\end{equation*}
We embed the matrix $B\in \mathbb{R}^{M\times N}$ into the following $N$-by-$N$ Toeplitz matrix $B_T$,
\begin{equation*}
\begin{split}
B_T=\left [ \begin{matrix}
b_{0}               & b_{1}              & \cdots            &  b_{J}          & \cdots      &  \cdots              & \cdots            & b_{N-1}      \\
b_{-1}               & b_{0}            & b_{1}             &     \ddots       & \ddots      &                &             & \vdots              \\
b_{ -2}              & b_{-1}           & b_{0}             &     \ddots       & \ddots      & \ddots      &               &  \vdots       \\
\vdots               & \ddots             &  \ddots           &     \ddots       & \ddots      &  \ddots      &  \ddots               & \vdots     \\
b_{-M+1}          &\cdots              & b_{ -2}          & b_{-1}            & b_{0}      & b_{1}        &  \cdots     &   b_{J}     \\ \hline
0                     &\ddots               &                     &  \ddots           &     \ddots          &   \ddots              &     \ddots           &  \vdots     \\
\vdots              &\ddots               &  \ddots            &                     &  \ddots    &  \ddots               &    \ddots            &   b_{1}     \\
0            &\cdots     &0                     &b_{-M+1}       & \cdots      & b_{ -2}     &  b_{-1}    &   b_{0}     \\
 \end{matrix}
 \right ]_{N \times N}
\end{split}.
\end{equation*}
Using  \eqref{ccs3.4} and fast Fourier transform, it implies that $Bw$  is the first $M$-rows of  $B_Tw$ with
\begin{equation*}
\left[\begin{array}{cc}B_T & \ast\\ \ast & B_T\end{array}\right]\left[\begin{array}{c} w\\ \bf 0\end{array}\right]=\left[\begin{array}{c}B_Tw\\ \ddag\end{array}\right].
\end{equation*}
 Let the rectangular matrix $C\in \mathbb{R}^{N\times M}$ be defined by
\begin{equation*}
\begin{split}
C=\left [ \begin{matrix}
c_{0}               & c_{1}               &     \cdots       &c_{M-1}         \\
c_{-1}              &\ddots               &     \ddots    & \vdots     \\
\vdots               & \ddots              &     \ddots    & c_{1}       \\
c_{-J}               & \cdots             &     c_{-1}       &c_{0}       \\
 \vdots                &   \ddots            &      \ddots               & c_{-1}     \\
 \vdots              &                 & \ddots                      & \vdots      \\
c_{-N+1}         & \cdots                 & \cdots      &c_{-J}
 \end{matrix}
 \right ]_{N\times M}~~{\rm with}~~J=N-M.
\end{split}
\end{equation*}
Similar, we embed matrix $C\in \mathbb{R}^{N\times M}$ into the following $N$-by-$N$ Toeplitz matrix $C_T$,
\begin{equation*}
\begin{split}
C_T=\left [ \begin{matrix}
c_{0}               & c_{1}               &     \cdots       &c_{M-1}         \\
c_{-1}              &\ddots               &     \ddots    & \vdots     \\
\vdots               & \ddots              &     \ddots    & c_{1}       \\
c_{-J}               & \cdots             &     c_{-1}       &c_{0}       \\
 \vdots                &   \ddots            &      \ddots               & c_{-1}     \\
 \vdots              &                 & \ddots                      & \vdots      \\
c_{-N+1}         & \cdots                 & \cdots      &c_{-J}
 \end{matrix}
 \right |
\left . \begin{matrix}
 0     &\cdots     &0          \\
c_{M-1}      &\ddots     &\vdots  \\
 \vdots               &\ddots     &0      \\
  c_{1}      &     \cdots         &c_{M-1}      \\
 c_0    &  \ddots    &\vdots     \\
 \ddots     &\ddots     &c_{1}    \\
 \cdots     &c_{-1}     &c_{0}
 \end{matrix}
 \right ]_{N \times N}.
\end{split}
\end{equation*}
From   \eqref{ccs3.4} and fast Fourier transform, we have
\begin{equation*}
Cv=C_T{\bf v}~~{\rm with}~~\left[\begin{array}{cc}C_T & \ast\\ \ast & C_T\end{array}\right]\left[\begin{array}{c} {\bf v}\\ \bf 0\end{array}\right]=\left[\begin{array}{c}C_T{\bf v}\\ \ddag\end{array}\right],
{\bf v}=\left[\begin{array}{c}  v\\ {\bf 0}_{J\times 1}\end{array}\right].
\end{equation*}

\subsubsection{Fast Fourier transform  for block-structured dense  systems at the coarser level}
In fact,  embedding rectangular matrices $B$ and $C$ in \eqref{ccs3.5} into  the square Toeplitz matrices in subsubsection \ref{ssub1}
is invalid, when using the Galerkin projection \eqref{eq:5 Galerkin} in the AMG method,
although this technique can be applied to compute the finest level for a fast AMG,
since it does not keep block-structured Toeplitz properties.
Next, we shall design the  FFT for  the block-structured matrix $\mathcal{A}_k$ multiplies vector   at the coarser level, i.e.,
\begin{equation*}
{\mathcal{A}}_k\left[\begin{array}{c} u^k \\ v^k  \end{array}\right]=\left[\begin{array}{c}
A^{(k)} u^k + B^{(k)} v^k\\[2mm]
C^{(k)} u^k + D^{(k)} v^k
\end{array}\right].
\end{equation*}
 In AMG, the coarse problem at the  level $k<K$ is typically defined using the Galerkin approach, i.e., the coefficient matrix on the coarser grid can be computed by
\begin{equation}\label{eq:block Galerkin}
\mathcal{A}_{k-1} = I_k^{k-1}\mathcal{A}_k I^k_{k-1}.
\end{equation}
More concretely,
\begin{equation*}
\mathcal{A}_{k-1} =\left[\begin{array}{cc}
A^{(k-1)} & B^{(k-1)} \\[2mm]
C^{(k-1)} & D^{(k-1)}
\end{array}\right]_{(2N_{k-1}+1)\times(2N_{k-1}+1)}=
I_k^{k-1}\left[\begin{array}{cc}
A^{(k)} & B^{(k)} \\[2mm]
C^{(k)} & D^{(k)}
\end{array}\right]_{(2N_k+1)\times(2N_k+1)}I^k_{k-1}
\end{equation*}
with  $I_k^{k-1}\in \mathbb{R}^{N_k\times(2N_k+1)} $ in \eqref{eq:5 Galerkin}.

It should be noted that $A^{(k)}, B^{(k)}, C^{(k)}, D^{(k)}$ are Toeplitz matrices if $k=K$,
which corresponds to  the block-structured dense system  \eqref{ccs3.5} at the fineast level with the computational count of $\mathcal{O}(N \log  N)$ arithmetic operations.
However, there is major  difference for $\mathcal{A}_{k-1}$  at the coarser level, since it does not keep block-structured Toeplitz properties, see Example \ref{ex1}.
\begin{example}\label{ex1}
Choose   the unit matrices     $A^{(k)}\in \mathbb{R}^{7\times 7}$, $D^{(k)}\in \mathbb{R}^{8\times 8}$ and the
rectangular ones  matrices $B^{(k)}\in \mathbb{R}^{7\times 8}$, $C^{(k)}\in \mathbb{R}^{8\times 7}$
(all the  elements are $1$). Using the Galerkin approximation \eqref{eq:block Galerkin}, we deduce
\begin{equation*}
\left[\begin{array}{cc}
A^{(k-1)} & B^{(k-1)} \\[2mm]
C^{(k-1)} & D^{(k-1)}
\end{array}\right]=I_k^{k-1}\mathcal{A}_k I^k_{k-1}
=\frac{1}{8}\left[\begin{array}{ccc }
     6  &   1  &   0    \\
     1  &   6  &   1   \\
     0  &   1  &   6    \\ \hline
    12  &  12  &  13    \\  \hline
    16  &  16 &   16    \\
    16  &  16 &   16    \\
    16  &  16 &   16   \\
\end{array}\right|
\left.\begin{array}{c }
       12  \\
       12   \\
       13   \\ \hline
       12    \\  \hline
       5    \\
       4    \\
        4    \\
\end{array}\right|
\left.\begin{array}{ccc }
         16  &  16  &  16 \\
         16  &  16  &  16\\
         16  &  16   & 16 \\ \hline
          5  &   4   &  4 \\  \hline
         6  &   1   &  0 \\
           1  &   6   &  1 \\
            0  &   1  &   6 \\
\end{array}\right].
\end{equation*}
\end{example}

We next design fast Fourier transform  for block-structured dense  systems at the coarser level in AMG. Let
\begin{equation}\label{eq: split01}
\mathcal{A}_{k} =
\left[\begin{array}{ccc}
     A^{(k)}                      &\textbf{0}       & \bar{B} ^{(k)}              \\
   \textbf{0 }'                   &  0                  & \textbf{0 }'      \\
    {\bar{C}^{(k)}}           &  \textbf{0}      & \bar{D}^{(k)}
\end{array}\right]_{(2N_k+1)\times(2N_k+1)} +
\left[\begin{array}{ccc}
     \textbf{0}                & p^{(k)}                & \textbf{0}        \\
     {q^{(k)}}                 &  o^{(k)}               & {\zeta^{(k)}}                  \\
    \textbf{0}                 &  \xi^{(k)}              & \textbf{0}
\end{array}\right]_{(2N_k+1)\times(2N_k+1)},
\end{equation}
where    $ A^{(k)},\bar{B} ^{(k)} , {\bar{C}^{(k)}},\bar{D}^{(k)}$ are the square matrices with
\begin{equation*}
  B^{(k)} = \left[\begin{array}{cc} p^{(k)} & \bar{B}^{(k)}  \end{array}\right],
  ~C^{(k)}=\left[\begin{array}{c} q^{(k)} \\  [1mm]
  \bar{C}^{(k)}  \end{array}\right],
  ~D^{(k)}=\left[\begin{array}{cc} o^{(k)} & \zeta^{(k)} \\ [1mm]
   \xi^{(k)} &  \bar{D}^{(k)}  \end{array}\right].
\end{equation*}
The symbol $o^{(k)}$ is a real  number,   $0$ denotes a zero number,  and the bold $\textbf{0}$ denotes
a zero matrix/vector with the corresponding size.
The coefficients  $p^{(k)},\xi^{(k)}$ denote the column vectors and  ${q^{(k)}}, {\zeta^{(k)}}$ denote the row vectors.
We may call it the \emph{Cross-splitting} technique, since we main focus on the fast Fourier transform for  Cross-type matrix in \eqref{eq: split01}.

From \eqref{eq: split01}, it yields
\begin{equation*}\label{eq:mat_vec multiply split}
\begin{split}
{\mathcal{A}}_k\left[\begin{array}{c} v^k \\ w^k  \end{array}\right]
=&\left(\left[\begin{array}{ccc}
     A^{(k)}                      &\textbf{0}       & \bar{B} ^{(k)}              \\
   \textbf{0 }'                   &  0                  & \textbf{0 }'      \\
    {\bar{C}^{(k)}}           &  \textbf{0}      & \bar{D}^{(k)}
\end{array}\right]+
\left[\begin{array}{ccc}
     \textbf{0}                & p^{(k)}                & \textbf{0}        \\
     {q^{(k)}}                 &  o^{(k)}               & {\zeta^{(k)}}                  \\
    \textbf{0}                 &  \xi^{(k)}              & \textbf{0}
\end{array}\right]\right)
\left[\begin{array}{c} v^k \\ w^k_o \\ \bar{w}^k  \end{array}\right]\\
=&\left[\begin{array}{c}
     A^{(k)}v^k  + \bar{B} ^{(k)}\bar{w}^k              \\
                        0                                                 \\
    {\bar{C}^{(k)}}v^k  +\bar{D}^{(k)}\bar{w}^k
\end{array}\right]+
\left[\begin{array}{ccc}
                                       p^{(k)}w^k_o                                                      \\
    q^{(k)}v^k + o^{(k)}w^k_o +\zeta^{(k)}\bar{w}^k                \\
                                       \xi^{(k)}w^k_o
\end{array}\right]~~{\rm with}~~w^k=\left[\begin{array}{c} w^k_o \\ \bar{w}^k  \end{array}\right].
\end{split}
\end{equation*}
%\label{ccs3.4}
 Obviously, since $A^{(k)}$, $\bar{B} ^{(k)}$, ${\bar{C}^{(k)}}$, $\bar{D}^{(k)}$ are Toeplitz matrices, the computation of $A^{(k)}v^k$, $\bar{B} ^{(k)}\bar{w}^k$, ${\bar{C}^{(k)}}v^k$, and $\bar{D}^{(k)}\bar{w}^k$ by FFT needs $\mathcal{O}(N_k{\rm log}N_k)$, and with required storage $\mathcal{O}(N_k)$. For  the Cross  matrix, see $p^{(k)}$, $q^{(k)}$, $\zeta^{(k)}$, $\xi^{(k)}$ and $o^{(k)}$,
 results in   $\mathcal{O}(N_k)$  complexity and   storage  operations.

Let the stiffness matrix of the coarser level be
 \begin{equation}\label{eq: split 02}
\mathcal{A}_{k-1} =
\left[\begin{array}{ccc}
     A^{(k-1)}                      &\textbf{0}       & \bar{B} ^{(k-1)}              \\
   \textbf{0 }'                   &  0                  & \textbf{0 }'      \\
    {\bar{C}^{(k-1)}}           &  \textbf{0}      & \bar{D}^{(k-1)}
\end{array}\right]_{N_k\times N_k} +
\left[\begin{array}{ccc}
     \textbf{0}                & p^{(k-1)}                & \textbf{0}        \\
     {q^{(k-1)}}              &  o^{(k-1)}               & {\zeta^{(k-1)}}                  \\
    \textbf{0}                 &  \xi^{(k-1)}              & \textbf{0}
\end{array}\right]_{N_k\times N_k}.
\end{equation}
Then using  \eqref{eq: split01}, \eqref{eq: split 02}, and a Galerkin approach in \eqref{eq:block Galerkin}, it is easy to obtain   the following results.
\begin{lemma}\label{lm3.1}
Let $A^{(k)}=\{a_{i,j}^{(k)}\}_{i,j=1}^{N_k}$ with $a_{i,j}^{(k)}=a_{j-i}^{(k)}$ be a Toeplitz matrix in \eqref{eq: split01}. Then the elements of
$A^{(k-1)}$ in \eqref{eq: split 02} can be computed by
 \begin{equation*}
 8 a_0 ^{(k-1)} =a_{-2}  ^{(k)}  + 4a_{-1} ^{(k)} + 6a_0 ^{(k)} + 4a_1^{(k)} + a_2^{(k)},
\end{equation*}
and
\begin{equation*}
\begin{split}
 8 a_i ^{(k-1)} &= a_{2i-2}  ^{(k)}  + 4a_{2i-1} ^{(k)} + 6a_{2i} ^{(k)} + 4a_{2i+1}^{(k)} + a_{2i+2}^{(k)},\\
8  a_{-i} ^{(k-1)} &= a_{-2i-2}  ^{(k)}  + 4a_{-2i-1} ^{(k)} + 6a_{-2i} ^{(k)} + 4a_{-2i+1}^{(k)} + a_{-2i+2}^{(k)},~~~i\geq 1.
  \end{split}
\end{equation*}
Moreover, for the Cross matrix, the following terms can be defined
\begin{equation*}
\begin{split}
 8 p^{(k-1)}_i& = \left(a^{(k)}_{N_k-2i} + 2a^{(k)}_{N_k-2i-1} + a^{(k)}_{N_k-2i-2 }\right) + 2\left( p^{(k)}_{2i-1} + 2p^{(k)}_{2i} + p^{(k)}_{2i+1}\right) \\
                &\quad + \left(b^{(k)}_{-2i+2} + 2b^{(k)}_{-2i+1} + b^{(k)}_{-2i}\right), \\
8 q^{(k-1)}_i& = \left(a^{(k)}_{-N_k+2i} + 2a^{(k)}_{-N_k+2i+1} + a^{(k)}_{-N_k+2i+2 }\right) + 2\left( q^{(k)}_{2i-1} + 2q^{(k)}_{2i} + q^{(k)}_{2i+1}\right)\\
 &\quad + \left(c^{(k)}_{2i-2} + 2c^{(k)}_{2i-1} + c^{(k)}_{2i}\right),\\
 8  o^{(k-1)}&=\left(a^{(k)}_{0} + 2p^{(k)}_{N_k-1} + b^{(k)}_{-N_k+2 }\right) + 2\left( q^{(k)}_{N_k-1} + 2o^{(k)} + \zeta^{(k)}_{1}\right) +\left( c^{(k)}_{N_k-2} + 2\xi^{(k)}_{1} + d^{(k)}_{0}\right),
   \end{split}
\end{equation*}
and
\begin{equation*}
\begin{split}
 8 \xi^{(k-1)}_i &=\left(c^{(k)}_{N_k-2i} + 2c^{(k)}_{N_k-2i-1} + c^{(k)}_{N_k-2i-2 } \right)+ 2\left( \xi^{(k)}_{2i-1} + 2\xi^{(k)}_{2i}+ \xi^{(k)}_{2i+1}\right)\\
  &\quad  +\left(  d^{(k)}_{-2i+2} + 2d^{(k)}_{-2i+1} + d^{(k)}_{-2i}\right), \\
8\zeta^{(k-1)}_i &=\left(b^{(k)}_{-N_k+2i} + 2b^{(k)}_{-N_k+2i+1} + b^{(k)}_{-N_k+2i+2 }\right) + 2\left( \zeta^{(k)}_{2i-1} + 2\zeta^{(k)}_{2i} + \zeta^{(k)}_{2i+1}\right)\\
&\quad + \left(d^{(k)}_{2i-2} + 2d^{(k)}_{2i-1} + d^{(k)}_{2i}\right).
\end{split}
\end{equation*}
\end{lemma}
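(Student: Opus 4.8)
The plan is to establish all the displayed identities by one direct expansion of the Galerkin triple product $\mathcal{A}_{k-1}=I_k^{k-1}\mathcal{A}_k I^k_{k-1}$, reading off each entry of $\mathcal{A}_{k-1}$ and then sorting the contributions according to the Cross-splitting~\eqref{eq: split01}. The starting point is that, since $I^k_{k-1}=2(I_k^{k-1})^{T}$ and row $r$ of $I_k^{k-1}$ has only the three nonzero entries $\frac14,\frac12,\frac14$ in columns $2r-1,2r,2r+1$, the $(r,s)$ entry of the coarse matrix collapses to the finite weighted sum
$$
(\mathcal{A}_{k-1})_{r,s}=\frac18\sum_{\alpha=-1}^{1}\sum_{\beta=-1}^{1}\omega_\alpha\,\omega_\beta\,(\mathcal{A}_k)_{2r+\alpha,\,2s+\beta},\qquad(\omega_{-1},\omega_0,\omega_1)=(1,2,1),
$$
with the convention that entries carrying an out-of-range index are set to zero. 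This single formula drives everything that follows.

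Next I would localize the pair $(r,s)$ against the partition induced by~\eqref{eq: split01}: the fine index set $\{1,\dots,2N_k+1\}$ splits into the $v$-block $\{1,\dots,N_k\}$, the single cross index $N_k+1$, and the $\bar w$-block $\{N_k+2,\dots,2N_k+1\}$, and because $N_k=2N_{k-1}+1$ one has $2(N_{k-1}+1)\pm1\in\{N_k,N_k+1,N_k+2\}$. Hence the three-point restriction stencil of a coarse index sits entirely inside one fine block, unless that coarse index equals $N_{k-1}+1$, whose stencil is exactly $\{N_k,N_k+1,N_k+2\}$ and straddles all three fine blocks. This is the structural fact that forces the Cross-splitting to reproduce itself on the coarse level with one cross row and one cross column, and it organizes the computation into three cases.

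In the interior case ($r,s\le N_{k-1}$, or both indices in the $\bar w$-block, or one in each for the off-diagonal blocks) all nine fine entries come from a single Toeplitz block; substituting $(\mathcal{A}_k)_{j,\ell}=a^{(k)}_{\ell-j}$ (respectively $b^{(k)}_{\ell-j},c^{(k)}_{\ell-j},d^{(k)}_{\ell-j}$) and collecting the coefficient of $a^{(k)}_{2(s-r)+\gamma}$ over $\gamma=\beta-\alpha\in\{-2,-1,0,1,2\}$ produces the symmetric weights $(1,4,6,4,1)$, hence the stated formulas for $a^{(k-1)}_0$ and $a^{(k-1)}_{\pm i}$ (and the analogous identities for $\bar B^{(k-1)},\bar C^{(k-1)},\bar D^{(k-1)}$). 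In the cross-column case ($r\le N_{k-1}$, $s=N_{k-1}+1$) the three fine columns $N_k$, $N_k+1$, $N_k+2$ contribute, respectively, the last column of $A^{(k)}$ (the $a^{(k)}$-terms), the cross column $p^{(k)}$, and the first column of $\bar B^{(k)}$ (the $b^{(k)}$-terms), while the three fine rows stay inside the $v$-block; grouping by the column weight $\omega_\beta$ then gives the $p^{(k-1)}_i$ identity, and the transposed computation gives $q^{(k-1)}_i$, while the mixed cross-row/$\bar w$-column and $\bar w$-row/cross-column variants give $\zeta^{(k-1)}_i$ and $\xi^{(k-1)}_i$. Finally, in the doubly-cross case $r=s=N_{k-1}+1$ both stencils equal $\{N_k,N_k+1,N_k+2\}$, so one must evaluate the full $3\times3$ principal submatrix of $\mathcal{A}_k$ at these indices: its corner is $a^{(k)}_0$, its center is $o^{(k)}$, and the remaining entries are the end values of $p^{(k)},q^{(k)},\zeta^{(k)},\xi^{(k)}$ together with the boundary Toeplitz coefficients $b^{(k)},c^{(k)},d^{(k)}$; weighting by $\omega_\alpha\omega_\beta$ yields the claimed expression for $o^{(k-1)}$.

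The only genuine obstacle is clerical rather than conceptual: keeping every subscript of $p^{(k)},q^{(k)},\zeta^{(k)},\xi^{(k)}$ and of $a^{(k)},b^{(k)},c^{(k)},d^{(k)}$ correct at the seam joining the $v$-block, the single cross index, and the $\bar w$-block, where a one-step miscount of where a block begins or ends turns into an off-by-one shift in the output formulas. My strategy is therefore to fix the index conventions once ($A^{(k)}=\{a^{(k)}_{j-i}\}$ on rows and columns $1,\dots,N_k$; the cross index at $N_k+1$; the $\bar w$-block starting at $N_k+2$; the entries of $p^{(k)},q^{(k)},\zeta^{(k)},\xi^{(k)}$ numbered from $1$), check them against the small instance computed in Example~\ref{ex1}, and then let the three cases above run mechanically; no estimate, induction, or limiting argument enters the proof.
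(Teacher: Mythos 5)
Your proposal is correct and is essentially the paper's own argument, which offers no details beyond invoking the Galerkin product \eqref{eq:block Galerkin}: the entrywise stencil identity $(\mathcal{A}_{k-1})_{r,s}=\tfrac18\sum_{\alpha,\beta=-1}^{1}\omega_\alpha\omega_\beta(\mathcal{A}_k)_{2r+\alpha,2s+\beta}$ with weights $(1,2,1)$, the localization of each three-point stencil inside the fine $v$-block, the cross index $N_k+1$, or the $\bar{w}$-block, and the special treatment of the coarse index $N_{k-1}+1$ whose stencil $\{N_k,N_k+1,N_k+2\}$ straddles the seam, are exactly the intended computation and do reproduce the Toeplitz weights $(1,4,6,4,1)$ and the cross structure. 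Do carry out your planned check against Example \ref{ex1}: it confirms your alignment (fine indices $2r-1,2r,2r+1$), and it also shows that a few boundary indices in the printed cross formulas appear shifted by one relative to what the mechanical computation gives (for instance, at $i=N_{k-1}$ the example forces the $a^{(k)}$-contribution in $8p^{(k-1)}_i$ to be $a^{(k)}_{N_k-2i+1}+2a^{(k)}_{N_k-2i}+a^{(k)}_{N_k-2i-1}$, i.e.\ the value $13/8$ of Example \ref{ex1}, rather than the stated $a^{(k)}_{N_k-2i}+2a^{(k)}_{N_k-2i-1}+a^{(k)}_{N_k-2i-2}$), so your derivation proves the lemma up to these index misprints rather than literally as printed.
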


\subsection{The operation count and storage requirement}
We now study  the computation  complexity by the fast Fourier transform and the required storage for the block-structured dense system \eqref{ccs3.5} in AMG, arising from the nonlocal problems in Section 2.

From \eqref{ccs3.5}, we know that the  matrix $\mathcal{A}_h$ is a block-structured Toeplitz-like system.
Then, we only need to store the first (last) column, first (last) row and  principal diagonal  $\mathcal{A}_h$,
which have $\mathcal{O}(N)$ parameters, instead of the full matrix $\mathcal{A}_h$ with $N^2$ entries.
From Lemmas \ref{lm3.1}, we know that $\{A_k\}$ is a Toeplitz-like-plus-Cross  matrix  with the  sizes $2^{k-K}\mathcal{O}(N)$ storage.
Adding these terms together, we have
\begin{equation*}
  \mbox{Storage}=\mathcal{O}(N) \cdot \left( 1+\frac{1}{2}+\frac{1}{2^2}+\ldots+\frac{1}{2^{K-1}} \right)=\mathcal{O}(N).
\end{equation*}

Regarding the computational complexity, the matrix-vector product associated with the matrix $\mathcal{A}_h$ is a discrete convolution.
On the other hand the cost of a direct product is $O(N)$ for the  Cross  matrix, while the cost of using the FFT would lead to $O(N\log(N)$  for
computations involving a dense Toeplitz matrix as the one in \eqref{ccs3.4}.
Thus, the total per V-cycle AMG operation count is
\begin{equation*}
\mbox{Operation count}= \mathcal{O}( N\log N) \cdot \left( 1+\frac{1}{2}+\frac{1}{2^2}+\ldots+\frac{1}{2^{K-1}} \right)=\mathcal{O}( N\log N).
\end{equation*}

\section{Convergence of TGM for block-structured dense system \eqref{eq: system1}}
The  two-grid method (TGM) is rarely used in practice since the coarse grid operator may still be too
large to be solved exactly.  However, from a theoretical point of view, its study is useful as first step for evaluating the
MGM convergence speed, whose analysis usually begins from that of the TGM \cite{Arico:04,Arico:07,Pang:12,RugeSIAM1987,SaadInterativeSiam2003,Serra:02}.
In the following we  consider the convergence of the TGM for symmetric block-structured dense system \eqref{eq: system1}.
 Since the matrix $\mathcal{A}:=\mathcal{A}_h:=\mathcal{A}_K$ is symmetric positive definite, we can define
\begin{equation*}
  (u,v)_\mathcal{D} = (\mathcal{D}u,v),\ (u,v)_\mathcal{A }= (\mathcal{A}u,v),\ (u,v)_{\mathcal{A}\mathcal{D}^{-1}\mathcal{A}}=(\mathcal{A}u,\mathcal{A}v)_{\mathcal{D}^{-1}}
\end{equation*}
with $\mathcal{D}$ is the diagonal matrix of $\mathcal{A}$.
\begin{lemma}\cite{XuMMEPDES1996}\label{xu96}
Let $\mathcal{A}_K$ be a symmetric positive definite. Then

(1) the Jacobi method converges if and only if $2\mathcal{D}_K-\mathcal{A}_K$ is symmetric positive definite;

(2) the Damped Jacobi method converges if and only if $0<\omega<2/\lambda_{\max}(\mathcal{D}_K^{-1}\mathcal{A}_K)$.
\end{lemma}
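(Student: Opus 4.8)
The plan is to establish both statements through the standard spectral equivalence between the (damped) Jacobi iteration matrix and the generalized eigenvalue problem for the pencil $(\mathcal{A}_K,\mathcal{D}_K)$. First I would recall that the Jacobi iteration operator is $S_K = I - \mathcal{D}_K^{-1}\mathcal{A}_K$ and observe that, although $\mathcal{D}_K^{-1}\mathcal{A}_K$ is not symmetric in the Euclidean inner product, it is self-adjoint with respect to the $\mathcal{D}_K$-inner product $(u,v)_{\mathcal{D}_K}=(\mathcal{D}_Ku,v)$ (this uses that $\mathcal{D}_K$ is SPD, which holds because $\mathcal{A}_K$ is SPD and hence has positive diagonal entries). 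Consequently $\mathcal{D}_K^{-1}\mathcal{A}_K$ is diagonalizable with real eigenvalues, all of which are positive since they coincide with the eigenvalues of the SPD pencil; equivalently, via the congruence $\mathcal{D}_K^{-1/2}\mathcal{A}_K\mathcal{D}_K^{-1/2}$, the spectrum of $\mathcal{D}_K^{-1}\mathcal{A}_K$ equals that of a symmetric positive definite matrix.

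Next I would prove part (1). Convergence of the Jacobi method is equivalent to $\rho(S_K)<1$, i.e. to every eigenvalue $\lambda$ of $\mathcal{D}_K^{-1}\mathcal{A}_K$ satisfying $|1-\lambda|<1$, that is $0<\lambda<2$. Since we already know $\lambda>0$, the condition reduces to $\lambda_{\max}(\mathcal{D}_K^{-1}\mathcal{A}_K)<2$. I would then translate this spectral bound into the matrix statement: $\lambda_{\max}(\mathcal{D}_K^{-1}\mathcal{A}_K)<2$ holds if and only if $2I - \mathcal{D}_K^{-1}\mathcal{A}_K$ is positive definite (in the $\mathcal{D}_K$-inner product), and pre-multiplying by the SPD matrix $\mathcal{D}_K$ shows this is equivalent to $2\mathcal{D}_K - \mathcal{A}_K$ being symmetric positive definite. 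Symmetry of $2\mathcal{D}_K-\mathcal{A}_K$ is immediate; the positivity is the content of the equivalence, established by the Rayleigh-quotient identity $\frac{((2\mathcal{D}_K-\mathcal{A}_K)x,x)}{(\mathcal{D}_Kx,x)} = 2 - \frac{(\mathcal{A}_Kx,x)}{(\mathcal{D}_Kx,x)}$ and the fact that the set of such Rayleigh quotients $\frac{(\mathcal{A}_Kx,x)}{(\mathcal{D}_Kx,x)}$ has supremum $\lambda_{\max}(\mathcal{D}_K^{-1}\mathcal{A}_K)$.

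For part (2), the damped Jacobi iteration matrix is $S_{K,\omega} = I - \omega\mathcal{D}_K^{-1}\mathcal{A}_K$, whose eigenvalues are $1-\omega\lambda$ as $\lambda$ ranges over the (positive) spectrum of $\mathcal{D}_K^{-1}\mathcal{A}_K$. Convergence $\rho(S_{K,\omega})<1$ is equivalent to $|1-\omega\lambda|<1$ for all such $\lambda$, i.e. $0<\omega\lambda<2$ for all $\lambda$. Since all $\lambda$ are strictly positive, $\omega\lambda>0$ forces $\omega>0$, and then $\omega\lambda<2$ for every $\lambda$ is equivalent to $\omega\,\lambda_{\max}(\mathcal{D}_K^{-1}\mathcal{A}_K)<2$, i.e. $0<\omega<2/\lambda_{\max}(\mathcal{D}_K^{-1}\mathcal{A}_K)$. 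I would remark that part (1) is then just the special case $\omega=1$ rephrased, which serves as a consistency check.

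I do not expect a genuinely hard step here — the result is classical (it is cited from \cite{XuMMEPDES1996}) and follows from elementary spectral theory. The one point requiring a little care is the justification that $\mathcal{D}_K^{-1}\mathcal{A}_K$ has a full set of real, positive eigenvalues despite being non-symmetric in the Euclidean sense; this is handled cleanly by switching to the $\mathcal{D}_K$-inner product (legitimate because $\mathcal{D}_K\succ 0$) or, equivalently, by the symmetric congruence $\mathcal{D}_K^{-1/2}\mathcal{A}_K\mathcal{D}_K^{-1/2}$. Everything else is a routine translation between "$\rho(I-\omega M)<1$", "eigenvalue location", and "positive definiteness of $2\mathcal{D}_K-\mathcal{A}_K$".
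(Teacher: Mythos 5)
Your argument is correct: passing to the $\mathcal{D}_K$-inner product (equivalently the congruence $\mathcal{D}_K^{-1/2}\mathcal{A}_K\mathcal{D}_K^{-1/2}$) gives real positive eigenvalues of $\mathcal{D}_K^{-1}\mathcal{A}_K$, and the conditions $\rho(I-\omega\mathcal{D}_K^{-1}\mathcal{A}_K)<1$ then translate exactly into the two stated equivalences. The paper itself offers no proof of this lemma (it is quoted from the cited reference \cite{XuMMEPDES1996}), and your spectral argument is the standard one behind that classical result, so there is nothing to reconcile.
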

\begin{lemma} \cite[p.\,84]{RugeSIAM1987}\label{lemma4.7}
Let $\mathcal{A}_K$ be a symmetric positive definite. If $\eta\leq\omega(2-\omega \eta_0)$ with $\eta_0\geq\lambda_{\max}(\mathcal{D}_K^{-1}\mathcal{A}_K) $,  then the damped Jacobi iteration
with relaxation parameter $0 < \omega < 2/\eta_0 $ satisfies
\begin{equation}\label{{ccs4.2}}
 ||\mathcal{S}_K\nu^K||_{\mathcal{A}_K }^2 \leq  ||\nu^K||_{\mathcal{A}_K }^2 - \eta ||\mathcal{A}_K\nu^K||_{\mathcal{D}_K^{-1}}^2 \quad  \forall \nu^K \in \mathfrak{M}^{K}.
\end{equation}
\end{lemma}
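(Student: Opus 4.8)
The plan is to follow the classical Ruge–Stüben/Xu framework for establishing the smoothing property, working in the $\mathcal{A}_K$-inner product and reducing everything to an eigenvalue estimate for the symmetric matrix $X_K := \mathcal{D}_K^{-1/2}\mathcal{A}_K\mathcal{D}_K^{-1/2}$. First I would write the damped Jacobi error operator as $\mathcal{S}_K = I - \omega \mathcal{D}_K^{-1}\mathcal{A}_K$ and compute, for an arbitrary $\nu^K\in\mathfrak{M}^K$,
\begin{equation*}
\|\mathcal{S}_K\nu^K\|_{\mathcal{A}_K}^2 = (\mathcal{A}_K\mathcal{S}_K\nu^K,\mathcal{S}_K\nu^K) = \|\nu^K\|_{\mathcal{A}_K}^2 - 2\omega(\mathcal{A}_K\nu^K,\mathcal{D}_K^{-1}\mathcal{A}_K\nu^K) + \omega^2(\mathcal{A}_K\mathcal{D}_K^{-1}\mathcal{A}_K\nu^K,\mathcal{D}_K^{-1}\mathcal{A}_K\nu^K).
\end{equation*}
The first cross term is exactly $2\omega\|\mathcal{A}_K\nu^K\|_{\mathcal{D}_K^{-1}}^2$. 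The quadratic term is the one to control: writing $w := \mathcal{D}_K^{-1/2}\mathcal{A}_K\nu^K$, it equals $\omega^2 (X_K w, w)\le \omega^2\lambda_{\max}(X_K)\|w\|_2^2 = \omega^2\eta_0\|\mathcal{A}_K\nu^K\|_{\mathcal{D}_K^{-1}}^2$, using $\lambda_{\max}(X_K)=\lambda_{\max}(\mathcal{D}_K^{-1}\mathcal{A}_K)\le\eta_0$ (these matrices are similar, and $\mathcal{D}_K$ is SPD since $\mathcal{A}_K$ is SPD with positive diagonal).

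Combining the three contributions gives
\begin{equation*}
\|\mathcal{S}_K\nu^K\|_{\mathcal{A}_K}^2 \le \|\nu^K\|_{\mathcal{A}_K}^2 - \bigl(2\omega - \omega^2\eta_0\bigr)\|\mathcal{A}_K\nu^K\|_{\mathcal{D}_K^{-1}}^2 = \|\nu^K\|_{\mathcal{A}_K}^2 - \omega(2-\omega\eta_0)\|\mathcal{A}_K\nu^K\|_{\mathcal{D}_K^{-1}}^2.
\end{equation*}
Since $0<\omega<2/\eta_0$ we have $\omega(2-\omega\eta_0)>0$, and the hypothesis $\eta\le\omega(2-\omega\eta_0)$ together with $\|\mathcal{A}_K\nu^K\|_{\mathcal{D}_K^{-1}}^2\ge0$ lets me replace the coefficient $\omega(2-\omega\eta_0)$ by the smaller $\eta$, yielding exactly \eqref{{ccs4.2}}. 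I would also remark that Lemma \ref{xu96}(2) guarantees the iteration is genuinely convergent in this parameter range, so the estimate is not vacuous.

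The only mild subtlety — hardly an obstacle — is making sure the symmetrization step is legitimate: one needs $\mathcal{D}_K$ to be positive definite so that $\mathcal{D}_K^{-1/2}$ makes sense and $X_K$ is symmetric with the same spectrum as $\mathcal{D}_K^{-1}\mathcal{A}_K$. This follows because $\mathcal{A}_K^S$ in \eqref{eq: system1} is SPD, hence all its diagonal entries are strictly positive. Everything else is a direct expansion of a quadratic form and a single application of the Rayleigh quotient bound; no structural (Toeplitz/Cross) information about $\mathcal{A}_K$ is needed at this stage, since the lemma is a generic smoothing estimate that will later be combined with an approximation property for the specific transfer operators.
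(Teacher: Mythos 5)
Your proof is correct and follows exactly the classical Ruge--St\"{u}ben smoothing-property argument that the paper relies on by citation (Lemma \ref{lemma4.7} is quoted from \cite[p.\,84]{RugeSIAM1987} without an in-text proof): expand $\|\mathcal{S}_K\nu^K\|_{\mathcal{A}_K}^2$, symmetrize via $\mathcal{D}_K^{-1/2}\mathcal{A}_K\mathcal{D}_K^{-1/2}$, and bound the quadratic term by $\eta_0$ before invoking $\eta\leq\omega(2-\omega\eta_0)$. No gaps; the only cosmetic remark is that positivity of the diagonal of $\mathcal{D}_K$ already follows from the lemma's hypothesis that $\mathcal{A}_K$ is SPD, so you need not appeal to the specific system \eqref{eq: system1}.
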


\begin{lemma}\cite[p.\,89]{RugeSIAM1987}\label{lemma4.8}
Let $\mathcal{A}_K$ be a symmetric positive definite matrix and $S_K$ satisfies (\ref{{ccs4.2}}) and
\begin{equation}\label{{ccs4.3}}
   \min_{\nu^{K-1} \in \mathfrak{M}^{K-1} }||\nu^{K}-I_{K-1}^K\nu^{K-1}||_{\mathcal{D}_K}^2\leq \mu ||\nu^K||_{\mathcal{A}_K}^2 \quad  \forall \nu^K \in \mathfrak{M}^{K}
\end{equation}
with  $\mu>0$ independent of $\nu^K$. Then, $\mu\geq \eta>0$ and the convergence factor of TGM  satisfies
\begin{equation*}
||\mathcal{S}_K\mathcal{T}_K||_{\mathcal{A}_K}\leq \sqrt{1-\eta/\mu }\quad \forall \nu^K \in \mathfrak{M}^{K}.
\end{equation*}
\end{lemma}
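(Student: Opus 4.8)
The plan is to run the classical two-grid convergence argument (McCormick, Ruge--St\"uben), whose only substantive ingredient, beyond bookkeeping, is the interplay between the smoothing property \eqref{{ccs4.2}}, the approximation property \eqref{{ccs4.3}}, and the fact that the coarse-grid correction operator is an $\mathcal{A}_K$-orthogonal projection. Write $\mathcal{T}_K=I-\mathcal{P}_K$ with $\mathcal{P}_K:=I^{K}_{K-1}\mathcal{A}_{K-1}^{-1}I^{K-1}_K\mathcal{A}_K$ and $\mathcal{A}_{K-1}=I^{K-1}_K\mathcal{A}_K I^{K}_{K-1}$ the Galerkin coarse operator of \eqref{eq:block Galerkin}. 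First I would record that, since $I^{K}_{K-1}=2\,(I^{K-1}_K)^{T}$, the matrix $\mathcal{A}_{K-1}=2\,I^{K-1}_K\mathcal{A}_K(I^{K-1}_K)^{T}$ is symmetric positive definite --- hence invertible, because $\mathcal{A}_K$ is SPD and $I^{K-1}_K$ has full row rank --- and, using the Galerkin identity $I^{K-1}_K\mathcal{A}_K I^{K}_{K-1}=\mathcal{A}_{K-1}$, that $\mathcal{P}_K$ is idempotent ($\mathcal{P}_K^{2}=\mathcal{P}_K$) and $\mathcal{A}_K$-self-adjoint ($\mathcal{A}_K\mathcal{P}_K$ is symmetric). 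Therefore $\mathcal{T}_K$ is the $\mathcal{A}_K$-orthogonal projection onto the subspace $\mathcal{V}:=\{\,e\in\mathfrak{M}^K:\ (\mathcal{A}_Ke,\,I^{K}_{K-1}\nu^{K-1})=0\ \ \forall\,\nu^{K-1}\in\mathfrak{M}^{K-1}\,\}$; in particular $\|\mathcal{T}_K\nu^K\|_{\mathcal{A}_K}\le\|\nu^K\|_{\mathcal{A}_K}$ and $\mathcal{T}_K\nu^K\in\mathcal{V}$ for every $\nu^K\in\mathfrak{M}^K$.

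The core step is to upgrade \eqref{{ccs4.3}}, which is a bound valid over all of $\mathfrak{M}^K$, to the sharper pointwise estimate
\begin{equation*}
\|e\|_{\mathcal{A}_K}^{2}\ \le\ \mu\,\|\mathcal{A}_Ke\|_{\mathcal{D}_K^{-1}}^{2}\qquad\text{for all }e\in\mathcal{V}.
\end{equation*}
Indeed, for $e\in\mathcal{V}$ and arbitrary $\nu^{K-1}\in\mathfrak{M}^{K-1}$, the defining orthogonality allows one to subtract $I^{K}_{K-1}\nu^{K-1}$ for free, and then the Cauchy--Schwarz inequality relating the $\mathcal{D}_K^{-1}$-norm and the $\mathcal{D}_K$-norm gives $\|e\|_{\mathcal{A}_K}^{2}=(\mathcal{A}_Ke,\,e-I^{K}_{K-1}\nu^{K-1})\le\|\mathcal{A}_Ke\|_{\mathcal{D}_K^{-1}}\,\|e-I^{K}_{K-1}\nu^{K-1}\|_{\mathcal{D}_K}$; minimizing over $\nu^{K-1}$ and inserting \eqref{{ccs4.3}} yields $\|e\|_{\mathcal{A}_K}^{2}\le\sqrt{\mu}\,\|\mathcal{A}_Ke\|_{\mathcal{D}_K^{-1}}\,\|e\|_{\mathcal{A}_K}$, which is the claimed inequality.

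To finish, I would, for $\nu^K\in\mathfrak{M}^K$, put $e:=\mathcal{T}_K\nu^K\in\mathcal{V}$ and chain the smoothing property \eqref{{ccs4.2}} (applied to $e$), the sharpened estimate, and $\|\mathcal{T}_K\nu^K\|_{\mathcal{A}_K}\le\|\nu^K\|_{\mathcal{A}_K}$, obtaining
\begin{equation*}
\|\mathcal{S}_K\mathcal{T}_K\nu^K\|_{\mathcal{A}_K}^{2}=\|\mathcal{S}_Ke\|_{\mathcal{A}_K}^{2}\le\|e\|_{\mathcal{A}_K}^{2}-\eta\,\|\mathcal{A}_Ke\|_{\mathcal{D}_K^{-1}}^{2}\le(1-\eta/\mu)\,\|e\|_{\mathcal{A}_K}^{2}\le(1-\eta/\mu)\,\|\nu^K\|_{\mathcal{A}_K}^{2},
\end{equation*}
which is exactly $\|\mathcal{S}_K\mathcal{T}_K\|_{\mathcal{A}_K}\le\sqrt{1-\eta/\mu}$. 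For the remaining claim $\mu\ge\eta$: discarding the nonnegative term $\|\mathcal{S}_K\nu^K\|_{\mathcal{A}_K}^{2}$ in \eqref{{ccs4.2}} gives $\eta\,\|\mathcal{A}_Ke\|_{\mathcal{D}_K^{-1}}^{2}\le\|e\|_{\mathcal{A}_K}^{2}$ for every $e$, and combining this with the sharpened estimate on any $e\in\mathcal{V}\setminus\{0\}$ --- such $e$ exists because $\dim\mathfrak{M}^{K-1}<\dim\mathfrak{M}^K$ makes $I^{K}_{K-1}$ non-surjective, so $\mathcal{V}\ne\{0\}$ --- forces $\|e\|_{\mathcal{A}_K}^{2}\le(\mu/\eta)\,\|e\|_{\mathcal{A}_K}^{2}$, hence $\mu\ge\eta$; since $\eta>0$ by hypothesis, $1-\eta/\mu\in[0,1)$ and the TGM iteration matrix is a contraction in the $\mathcal{A}_K$-norm.

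I expect the main obstacle to be the core step of the second paragraph --- upgrading the global hypothesis \eqref{{ccs4.3}} to the pointwise bound $\|e\|_{\mathcal{A}_K}\le\sqrt{\mu}\,\|\mathcal{A}_Ke\|_{\mathcal{D}_K^{-1}}$ on the correction subspace $\mathcal{V}$ --- since it is the only place where the $\mathcal{A}_K$-orthogonality of $\mathcal{T}_K$ is genuinely used (to insert the free coarse vector $I^{K}_{K-1}\nu^{K-1}$ before applying Cauchy--Schwarz). Everything else is routine algebra; the one bookkeeping item requiring care is the normalization $I^{K}_{K-1}=2\,(I^{K-1}_K)^{T}$, which enters only through the definition and symmetry of $\mathcal{A}_{K-1}$ and does not affect the structure of the argument.
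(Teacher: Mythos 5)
Your proof is correct and is exactly the classical Ruge--St\"uben two-grid argument: the paper does not prove Lemma \ref{lemma4.8} itself but cites \cite[p.\,89]{RugeSIAM1987}, and your chain (coarse-grid correction as $\mathcal{A}_K$-orthogonal projection, approximation property applied on its range via Cauchy--Schwarz in the $\mathcal{D}_K^{\pm 1}$-norms, then the smoothing property) is the same proof as in that reference, including the correct observation that the factor $2$ in $I^K_{K-1}=2(I^{K-1}_K)^T$ cancels and does not disturb the $\mathcal{A}_K$-self-adjointness of $\mathcal{T}_K$.
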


Next we need to check the smoothing condition \eqref{{ccs4.2}} and approximation property \eqref{{ccs4.3}}, respectively.
We will use notion called weakly diagonal dominant, if the diagonal element of a matrix is at least as large as the sum  of absolute value of  off-diagonal elements in the same row or column.

\begin{lemma}\cite[p. 23]{VargaSpringer2000}\label{swlemma1}
Let  $\mathcal{A}_K\in \mathbb{R}^{N\times N}$ be a symmetric matrix.
If $\mathcal{A}_k$ is a  strictly diagonally dominant or irreducibly weakly  diagonally dominant matrix with positive real diagonal entries, then $\mathcal{A}_K$ is positive definite.
\end{lemma}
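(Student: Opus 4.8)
The plan is to reduce positive definiteness to positivity of eigenvalues, which is legitimate because $\mathcal{A}_K$ is real and symmetric and hence has only real eigenvalues. The common starting point for both cases is the Gershgorin circle theorem: every eigenvalue $\lambda$ of $\mathcal{A}_K=\{a_{ij}\}$ lies in some disk $\{z:|z-a_{ii}|\le\sum_{j\neq i}|a_{ij}|\}$, so in particular $\lambda\ge a_{ii}-\sum_{j\neq i}|a_{ij}|$ for a suitable index $i$ (for symmetric matrices row and column dominance coincide, so it is harmless to argue with rows only).

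In the strictly diagonally dominant case the argument closes at once: for every $i$ we have $a_{ii}-\sum_{j\neq i}|a_{ij}|>0$, so each Gershgorin disk is contained in the open right half-plane, whence $\lambda>0$ for all eigenvalues and $\mathcal{A}_K$ is positive definite.

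The irreducibly weakly diagonally dominant case is where the real work lies, and I expect the exclusion of the eigenvalue $0$ to be the main obstacle. Here the Gershgorin bound only yields $\lambda\ge0$, i.e. $\mathcal{A}_K$ is positive semidefinite, so it remains to prove that $\mathcal{A}_K$ is nonsingular. I would do this by the classical maximum-modulus argument for irreducible matrices: assume $\mathcal{A}_K x=0$ with $x\neq0$, rescale so that $\max_i|x_i|=1$, and put $S=\{i:|x_i|=1\}$, which is nonempty and — since irreducible diagonal dominance includes strict dominance in at least one row — a proper subset of $\{1,\dots,N\}$. For $i\in S$ the identity $a_{ii}x_i=-\sum_{j\neq i}a_{ij}x_j$ together with weak diagonal dominance forces the chain $|a_{ii}|=|a_{ii}||x_i|\le\sum_{j\neq i}|a_{ij}||x_j|\le\sum_{j\neq i}|a_{ij}|\le|a_{ii}|$ to consist entirely of equalities, so $|x_j|=1$, i.e. $j\in S$, for every $j$ with $a_{ij}\neq0$. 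Irreducibility means the directed graph of the nonzero off-diagonal pattern of $\mathcal{A}_K$ is strongly connected, so starting from any $i\in S$ one reaches every index and $S=\{1,\dots,N\}$; but then $\sum_{j\neq i}|a_{ij}|=|a_{ii}|$ for every row, contradicting the existence of a strictly dominant row. Hence $\mathcal{A}_K$ is nonsingular.

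Finally I would assemble the pieces: in the second case $\mathcal{A}_K$ is positive semidefinite (Gershgorin) and nonsingular (the irreducibility argument), so $0$ is not an eigenvalue and every eigenvalue is strictly positive, giving positive definiteness. An alternative route to semidefiniteness is the homotopy argument that $\mathcal{A}_K+tI$ is strictly diagonally dominant with positive diagonal for every $t>0$, hence positive definite by the first case, so letting $t\to0^+$ and using continuity of eigenvalues yields positive semidefiniteness; but definiteness still requires the nonsingularity lemma, so the graph-theoretic step appears unavoidable.
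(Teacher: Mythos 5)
Your proof is correct: the paper gives no proof of this lemma at all, quoting it directly from Varga, and your argument (Gershgorin to place the real eigenvalues of the symmetric matrix in $[a_{ii}-r_i,\,a_{ii}+r_i]$ with $a_{ii}>0$, plus the classical Taussky-type normalization/connectivity argument to exclude a zero eigenvalue when the matrix is irreducible, weakly dominant in every row and strictly dominant in at least one) is essentially the standard proof found in that reference. One cosmetic remark: your parenthetical claim that $S=\{i:|x_i|=1\}$ is a proper subset is unnecessary (and is in tension with what you prove next); the equality chain plus strong connectivity already forces $S=\{1,\dots,N\}$, which contradicts the strictly dominant row, so the argument stands without that aside.
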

\begin{lemma} \label{adddlemma4.1}
Let $\mathcal{A}_K:=\mathcal{A}^S_{h}$ be defined by  \eqref{eq: system1}.
Then $\mathcal{A}_K$ is a  weakly diagonally dominant symmetric matrix with positive entries on the diagonal and nonpositive off-diagonal entries.
\end{lemma}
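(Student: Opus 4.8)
The plan is to verify the three claimed properties of $\mathcal{A}_K = \mathcal{A}^S_h$ directly from the explicit Toeplitz structure in \eqref{eq: system1} and the coefficients in \eqref{coeeq: system1}. First I would recall that
\begin{equation*}
\mathcal{A}^S_h=\left[\begin{array}{cc} A & B\\ B^T & \widehat A \end{array}\right],
\end{equation*}
so symmetry is immediate from the block form. The diagonal entries are $a_0=12r-2>0$ (for $A$ and, with $a_0=12r-2$, for $\widehat A$), so the diagonal is positive. The off-diagonal entries are drawn from $\{a_m,a_r,a_{m+1/2}\}=\{-2,-1,-4\}$, all of which are nonpositive; I would check that no $+$ sign appears anywhere off the diagonal, including in the cross blocks $B$ and $B^T$, by reading off the generating vectors of the Toeplitz blocks. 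This handles the sign claims.

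The remaining and main task is the weak diagonal dominance, i.e.\ for each row $i$,
\begin{equation*}
|(\mathcal{A}_K)_{ii}| \ge \sum_{j\ne i}|(\mathcal{A}_K)_{ij}|.
\end{equation*}
Here I would argue row by row according to whether the row belongs to the $A$/$B$ block (an ``integer-node'' row) or the $B^T$/$\widehat A$ block (a ``semi-integer-node'' row), and within each, whether the row is interior (far enough from the ends that the full Toeplitz stencil fits) or near a boundary (where the $\mathbf 0$-padding in the generating vector truncates the stencil, which can only \emph{decrease} the off-diagonal sum and hence only helps). For a generic interior integer-node row the off-diagonal mass is, reading the stencils of $A$ and $B$: two copies of $a_r=-1$ at the far reach, $2(r-1)$ copies of $a_m=-2$, plus from the $B$-block $r$ copies of $a_{m+1/2}=-4$ (with the doubled first entry $a_{1/2}$ accounting for the interlacing of integer and semi-integer nodes); summing absolute values gives $2\cdot 1+2(r-1)\cdot 2+ r\cdot 4 = 2+4r-4+4r = 8r-2$, which is $\le 12r-2 = |a_0|$ with slack $4r$. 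For a generic interior semi-integer-node row one similarly collects the contributions of $\widehat A$ and $B^T$ and checks the analogous inequality against $a_0=12r-2$. I would present this as a short case analysis, noting that the boundary rows satisfy the inequality a fortiori because truncation removes nonnegative terms from the right-hand side.

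The step I expect to require the most care is the exact bookkeeping of the cross-block $B$ (and $B^T$): because $B$ has the doubled entry $a_{1/2}$ in its first column/row and a shifted sparsity pattern (the $\mathbf 0_{1\times(N-1-r)}$ versus $\mathbf 0_{1\times(N-r-1)}$ padding differs between the defining column and row), one must be careful which semi-integer neighbors actually couple to a given integer node, and symmetrically for $B^T$. The cleanest way to do this is to go back to the collocation stencil of $-\mathcal L_\delta$ at an integer node $x_i$ and at a semi-integer node $x_{i+1/2}$: each such stencil has total weight zero before the volume constraint is imposed (since $\mathcal L_\delta$ annihilates constants), the center weight is the positive $a_0$, and all neighbor weights are the negative numbers listed; weak diagonal dominance with strict slack then follows because imposing the constraint $u=g$ on $\Omega_{\mathcal I}$ only removes (negative-weight) exterior neighbors, turning the ``sum to zero'' identity into ``center $\ge$ sum of $|$interior neighbors$|$.'' Once this zero-row-sum-up-to-boundary observation is made explicit, the inequality is essentially free and the lemma follows; combined with Lemma~\ref{swlemma1} it also re-proves that $\mathcal{A}_K$ is positive definite, which is what the subsequent smoothing and approximation-property arguments will rely on.
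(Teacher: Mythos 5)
Your overall approach matches the paper's: read off the signs of the $a$-coefficients, and establish weak diagonal dominance from a row-sum identity. The paper simply records the signs and the identity
\begin{equation*}
a_0+2\sum_{m=1}^{r}a_m+2\sum_{m=0}^{r-1}a_{m+\frac{1}{2}}=0,
\end{equation*}
which says that a fully interior row has off-diagonal absolute sum \emph{exactly} equal to $a_0$; boundary rows only lose nonpositive terms, so dominance follows everywhere.

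However, your explicit bookkeeping of the $B$-block is off by a factor of two. For an interior integer-node row, the block $B$ contributes the palindromic stencil $a_{r-\frac{1}{2}},\dots,a_{\frac{3}{2}},a_{\frac{1}{2}},a_{\frac{1}{2}},a_{\frac{3}{2}},\dots,a_{r-\frac{1}{2}}$, i.e.\ $2r$ entries (not $r$), since the semi-integer neighbors lie on \emph{both} sides of $x_i$ out to radius $\delta$. The correct tally is therefore
\begin{equation*}
2\cdot 1 + 2(r-1)\cdot 2 + 2r\cdot 4 = 12r-2 = a_0,
\end{equation*}
giving \emph{equality}, not the slack $4r$ you claim. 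This matters: with equality for interior rows the matrix $\mathcal{A}_K$ is only \emph{weakly} diagonally dominant (as the lemma asserts) and, being reducible, is not obviously positive definite from Lemma~\ref{swlemma1} alone — which is precisely why the paper follows up with the separate Lemma~\ref{cdlemma4.2}. Your closing observation that the collocation stencil of $-\mathcal{L}_\delta$ has zero row sum is the right underlying fact and essentially reproduces the paper's identity; it supersedes the erroneous count, but the phrase ``strict slack'' should be restricted to near-boundary rows, where truncation of negative exterior coefficients turns the equality into a strict inequality.
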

\begin{proof}
From \eqref{coeeq: system1}, we have
\begin{equation*}
a_{0}>0, \quad a_{m}<0,~~1\leq  m\leq r \quad {\rm and} \quad a_{m+\frac{1}{2}}<0,~~0\leq m\leq r-1,
\end{equation*}
and
\begin{equation*}
a_0+2\sum_{m=1}^{r}a_m+2\sum_{m=0}^{r-1}a_{m+\frac{1}{2}}=0.
\end{equation*}
The proof is completed.
\end{proof}

We known that  the matrix  $\mathcal{A}_K$ in \eqref{eq: system1} is  reducible, since its graph is not connected \cite[p. 91]{SaadInterativeSiam2003},
which  may lead to the matrix $\mathcal{A}_K$ is  singular or semi-positive definite, see     Lemmas  \ref{swlemma1} and \ref{adddlemma4.1}.
\begin{lemma} \label{cdlemma4.2}
Let $\mathcal{A}_K:=\mathcal{A}^S_{h}$ be defined by  \eqref{eq: system1}.
Then $\mathcal{A}_K$ is a symmetric positive definite matrix with positive entries on the diagonal and nonpositive off-diagonal entries.
\end{lemma}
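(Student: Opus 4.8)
The plan is to show that $\mathcal{A}_K := \mathcal{A}^S_h$ is not merely weakly diagonally dominant (Lemma \ref{adddlemma4.1}), but is in fact positive definite despite being reducible. The obstruction is exactly the reducibility: Lemma \ref{swlemma1} gives positive definiteness only under \emph{irreducible} weak diagonal dominance, and the graph of $\mathcal{A}_K$ is disconnected because the $2\times 2$ block is built from the coupling of integer and semi-integer nodes whose finite-bandwidth Toeplitz structure (with the trailing zeros $\textbf{0}_{1\times(N-r-2)}$, etc.) leaves certain nodes unreachable from others. So the first step is to inspect the connectivity pattern and identify the connected components of the adjacency graph induced by the sparsity of $A$, $B$, $B^T$, $\widehat A$.

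Once the components are identified, the strategy is a standard block/component decomposition: permute $\mathcal{A}_K$ by a permutation matrix $P$ so that $P^T\mathcal{A}_K P = \operatorname{diag}(\mathcal{A}^{(1)}, \dots, \mathcal{A}^{(p)})$ is block diagonal, one block per connected component. Each block $\mathcal{A}^{(j)}$ inherits from Lemma \ref{adddlemma4.1} the properties of being symmetric, having positive diagonal and nonpositive off-diagonal entries, and being weakly diagonally dominant; moreover each $\mathcal{A}^{(j)}$ is now \emph{irreducible} by construction. I then want to show that in at least one row of each component the diagonal dominance is \emph{strict} — this is where the boundary modification matters: the rows corresponding to nodes within distance $r$ of the endpoints (those altered in \eqref{eq:rightside1}--\eqref{eq:rightside2}, equivalently the rows where the Toeplitz symmetry $a_0 + 2\sum a_m + 2\sum a_{m+1/2} = 0$ is broken because some of the neighbor couplings fall outside the domain and are moved to the right-hand side) will have strictly positive row sum. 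Having an irreducible, weakly diagonally dominant matrix with positive diagonal that is strictly dominant in at least one row, Lemma \ref{swlemma1} applies to each $\mathcal{A}^{(j)}$, giving $\mathcal{A}^{(j)} \succ 0$, and hence $\mathcal{A}_K = P\operatorname{diag}(\mathcal{A}^{(1)},\dots,\mathcal{A}^{(p)})P^T \succ 0$. The sign conditions on entries are preserved under the symmetric permutation, so the full statement follows.

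The main obstacle I expect is the bookkeeping needed to pin down the connected components and to verify strict diagonal dominance in each one: I must argue carefully that every component actually \emph{contains} at least one boundary-modified row (equivalently, that no component consists solely of "interior" rows whose row sum is exactly zero), for otherwise that component would be only semidefinite and the claim would fail. This requires using the explicit bandwidth $r$ of $A$, $B$, $\widehat A$ relative to the system size $N = 2^K$, and the interleaving of the $v$-block (integer nodes $1,\dots,N-1$) with the $w$-block (semi-integer nodes $\tfrac12,\dots,N-\tfrac12$) through the off-diagonal blocks $B, B^T$. Concretely I would show that via the $B$/$B^T$ coupling every semi-integer node is adjacent to a nearby integer node and vice versa, so the graph is "locally connected" along the interval, and any connected component is a contiguous run of nodes; since the interval has two ends and the bands are finite, each such run that is a full component must reach an end and therefore contain a modified row. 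An alternative, cleaner route that sidesteps the component analysis is to exhibit the positive definiteness directly: write $\mathcal{A}_K = \mathcal{A}_K^{(0)} + R$ where $\mathcal{A}_K^{(0)}$ is the "pure Toeplitz" (circulant-like, zero-row-sum, hence positive semidefinite by Lemma \ref{adddlemma4.1}-type reasoning applied after a connectivity check) part and $R \succeq 0$ is a low-rank correction concentrated at the boundary with strictly positive contribution on $\ker \mathcal{A}_K^{(0)}$; then $x^T\mathcal{A}_K x = 0$ would force $x \in \ker \mathcal{A}_K^{(0)} \cap \ker R = \{0\}$. I would present the component-decomposition argument as the primary proof since it meshes directly with Lemmas \ref{swlemma1} and \ref{adddlemma4.1}, and only sketch the splitting argument as a remark.
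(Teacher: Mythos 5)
Your route is valid, but it is genuinely different from the paper's. The paper never touches connectivity or irreducibility: it writes $\mathcal{A}_K=\mathcal{A}_{\rm res}-a_1\mathcal{A}_{\rm main}$ with $\mathcal{A}_{\rm main}={\rm diag}(L_{N-1},L_N)$, $L_N={\rm tridiag}(-1,2,-1)$, notes that $-a_1\mathcal{A}_{\rm main}$ is symmetric positive definite because $a_1<0$ and the discrete Laplacian blocks are, and then shows the residual $\mathcal{A}_{\rm res}$ is symmetric, weakly diagonally dominant with nonnegative diagonal, hence positive semidefinite by Gershgorin; the sum of an SPD and a PSD matrix gives the claim. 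This is exactly how the authors sidestep the reducibility worry they raise before the lemma. Your argument instead runs through Varga-type irreducible diagonal dominance (Lemma \ref{swlemma1}) plus the strict dominance of the boundary rows created by the band truncation, which is also a legitimate proof provided two points are made precise: (i) your premise that the graph of $\mathcal{A}_K$ is disconnected is in fact not true for \eqref{eq: system1} — since $a_{m+\frac12}=-4\neq0$, the block $B$ couples each integer node to its two neighbouring semi-integer nodes, so the adjacency graph contains the path $\tfrac12\sim1\sim\tfrac32\sim2\sim\cdots$ and is connected; this is harmless (your decomposition collapses to a single irreducible block, and your worry about an all-interior component evaporates), but your "contiguous runs must reach an end" argument as stated would not hold for a hypothetical middle component, so you should replace it by the direct connectivity observation; (ii) the strict dominance of, say, the first row must be checked explicitly from \eqref{coeeq: system1} (diagonal $12r-2$ versus off-diagonal mass $6r+3$, margin $6r-5>0$), since Lemma \ref{swlemma1} in the weakly dominant case needs irreducibility together with strict inequality in at least one row, exactly as in Varga. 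What each approach buys: yours exploits the boundary truncation directly and needs no auxiliary splitting, while the paper's splitting avoids any graph bookkeeping and yields positive definiteness from the $-a_1\mathcal{L}$-part alone; your sketched alternative splitting ($\mathcal{A}_K=\mathcal{A}_K^{(0)}+R$ with a boundary-supported $R$) is closer in spirit to the paper, except that the paper's positive-definite part is the interior Laplacian piece rather than a boundary correction, which makes the kernel-intersection step unnecessary.
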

\begin{proof}
Let $L_{N}={\rm tridiag}(-1,2,-1)\in \mathbb{R}^{N\times N}$ be the  discrete Laplacian operator.
Let
\begin{equation}\label{{ccs4.4}}
\mathcal{A}_K=\mathcal{A}_{\rm res}-a_1\mathcal{A}_{\rm main}
 ~~{\rm with}~~\mathcal{A}_{\rm main}=
\left[\begin{array}{cc}
L_{N-1} & O        \\
	O & L_{N}
\end{array}\right].
\end{equation}
We can check  the matrix  $-a_1\mathcal{A}_{\rm main}$ with $a_1<0$ is  positive definite.
We next prove   $\mathcal{A}_{\rm res}$ is a semi-positive definite matrix.
From \eqref{{ccs4.4}}, it implies that
the principal diagonal elements are positive and the off-diagonal are non-positive of the matrix $\mathcal{A}_{\rm res}$.
Then  $\mathcal{A}_{\rm res}$ is still a diagonally dominant symmetric matrix.
Thus, $\mathcal{A}_{\rm res}$ is a semi-positive matrix by the Gerschgorin circle theorem  \cite[p.\,388]{Horn:13}.
The proof is completed.
\end{proof}
\begin{lemma}\label{adddlemma4.1220}\label{lem4.7}
Let the discrete Laplacian-like operators  $\{L^M_j\}_{j=1}^{M-1}\in\mathds{R}^{M\times M}$ and discrete  block-structured Laplacian  operators  be, respectively,  defined by
\begin{equation*}
L^M_{j}=\left [ \begin{matrix}
2      &\overset{j-1 \text{  zeros }}{\overbrace{  \cdots }} &   -1       &           &             \\
\ddots &      \ddots                                                    &  \ddots    &  \ddots   &              \\
 -1    &     \ddots                                                &  \ddots    &  \ddots   &      -1       \\
       &             \ddots                                        &  \ddots    &   \ddots  &     \ddots     \\
       &                                                           &    -1      &   \ddots  &      2
 \end{matrix}
 \right ]_{M \times M}
 ~~{\rm and}~~
 \mathcal{L}_j = \left[ \begin{array}{cc}  {L}^M_j  & ~ \mathbf{0}   \\
                                                           \mathbf{0}   &~  L^N_j  \end{array}\right].
\end{equation*}
Then the matrices
$$\left(\frac{2}{l}\sum\limits_{j=1}^lL^M_{j}\right)-L^M_{1},~ ~~~  \left(\frac{2}{l}\sum\limits_{j=1}^l \mathcal{L}_j\right)-  \mathcal{L}_1
~~{\rm and}~~2 \mathcal{L}_1- L^{M+N}_1$$
 are  positive definite.
\end{lemma}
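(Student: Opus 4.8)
The plan is to reduce all three assertions to the positivity of a single Toeplitz symbol.

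\emph{(a) Toeplitz reformulation.} First I would remark that $L^M_j$ is \emph{exactly} the $M\times M$ Toeplitz matrix generated by $f_j(\theta)=2-2\cos(j\theta)$: its $(p,q)$ entry depends only on $p-q$, being $2$ if $p=q$, $-1$ if $|p-q|=j$, and $0$ otherwise. By linearity of the map $f\mapsto T_M(f)$, the first matrix is again Toeplitz,
\[
\Big(\tfrac{2}{l}\sum_{j=1}^{l}L^M_j\Big)-L^M_1=T_M(\Phi_l),\qquad
\Phi_l(\theta):=\tfrac{2}{l}\sum_{j=1}^{l}\bigl(2-2\cos(j\theta)\bigr)-\bigl(2-2\cos\theta\bigr),
\]
and I would use the classical identity $x^{*}T_M(\Phi_l)x=\frac{1}{2\pi}\int_{-\pi}^{\pi}\Phi_l(\theta)\,\left|\sum_{k=1}^{M}x_k e^{\mathrm{i}k\theta}\right|^{2}d\theta$ (Parseval, with $x$ extended by zero). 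Since for $x\ne\mathbf{0}$ the trigonometric polynomial $p_x(\theta)=\sum_k x_k e^{\mathrm{i}k\theta}$ has only finitely many zeros, this reduces the first claim to showing $\Phi_l\ge 0$ on $[-\pi,\pi]$ and $\Phi_l\not\equiv 0$; the latter is clear, since the $\cos(l\theta)$-coefficient of $\Phi_l$ equals $-4/l\neq0$ for $l\ge2$, and $\Phi_1=2-2\cos\theta$.

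\emph{(b) Symbol positivity (the crux).} It remains to prove $\Phi_l\ge0$. Using $1-\cos(j\theta)=2\sin^{2}(j\theta/2)$, clearing $l$, and setting $\psi=\theta/2$ (by evenness and $\pi$-periodicity it suffices to treat $\psi\in[0,\pi/2]$), this is equivalent to $2\sum_{j=1}^{l}\sin^{2}(j\psi)\ge l\sin^{2}\psi$. The elementary identity $2\sin^{2}(j\psi)-\sin^{2}\psi=\cos^{2}\psi-\cos(2j\psi)$ rewrites this as $\sum_{j=1}^{l}\cos(2j\psi)\le l\cos^{2}\psi$; summing the left side as a Dirichlet kernel, $\sum_{j=1}^{l}\cos(2j\psi)=\tfrac12\bigl(\sin((2l+1)\psi)/\sin\psi-1\bigr)$ for $\psi\in(0,\pi/2]$, and using $\sin((2l+1)\psi)=\sin(2l\psi)\cos\psi+\cos(2l\psi)\sin\psi$, the inequality becomes (after clearing $\sin\psi>0$)
\[
\sin\psi\,\bigl(1-\cos(2l\psi)\bigr)\;+\;\cos\psi\,\bigl(l\sin(2\psi)-\sin(2l\psi)\bigr)\;\ge\;0 .
\]
On $[0,\pi/2]$ all four factors are nonnegative: $\sin\psi\ge0$, $\cos\psi\ge0$, $1-\cos(2l\psi)\ge0$, and $l\sin(2\psi)\ge\sin(2l\psi)$ because $|\sin(lu)|\le l|\sin u|$ while $\sin(2\psi)\ge0$ for $2\psi\in[0,\pi]$. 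This proves $\Phi_l\ge0$ (with $\psi=0$, i.e.\ $\theta=0$, the trivial case $\Phi_l(0)=0$), hence by (a) the first matrix is positive definite.

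\emph{(c) The remaining two matrices.} The matrix $\bigl(\tfrac{2}{l}\sum_{j=1}^{l}\mathcal{L}_j\bigr)-\mathcal{L}_1$ is block diagonal with diagonal blocks $\bigl(\tfrac{2}{l}\sum_j L^M_j\bigr)-L^M_1$ and $\bigl(\tfrac{2}{l}\sum_j L^N_j\bigr)-L^N_1$, each positive definite by (a)--(b) applied to size $M$, resp.\ $N$; a block-diagonal matrix with positive definite blocks is positive definite. For $2\mathcal{L}_1-L^{M+N}_1$ I would write the discrete Laplacian in $2\times2$ block form with blocks of sizes $M$ and $N$,
\[
L^{M+N}_1=\begin{bmatrix} L^M_1 & E\\[1mm] E^{T} & L^N_1\end{bmatrix},\qquad E\in\mathbb{R}^{M\times N},\quad E_{M,1}=-1,\quad E_{p,q}=0\ \text{otherwise},
\]
and observe that, since $2\mathcal{L}_1$ has diagonal blocks $2L^M_1,2L^N_1$ and zero off-diagonal blocks, $2\mathcal{L}_1-L^{M+N}_1$ is precisely $L^{M+N}_1$ with $E$ replaced by $-E$; consequently $2\mathcal{L}_1-L^{M+N}_1=P\,L^{M+N}_1\,P$ with $P={\rm diag}(I_M,-I_N)=P^{-1}$. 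Thus $2\mathcal{L}_1-L^{M+N}_1$ is congruent to $L^{M+N}_1={\rm tridiag}(-1,2,-1)$, which is positive definite (e.g.\ by Lemma~\ref{swlemma1}), so $2\mathcal{L}_1-L^{M+N}_1$ is positive definite as well. The only non-routine step is the sharp symbol inequality $\Phi_l\ge0$ of (b) (equality holds at $\theta=0$, and at $\theta=\pm\pi$ when $l$ is even), so crude triangle-inequality bounds on $\sum_j\cos(2j\psi)$ are too weak near those frequencies and one genuinely needs the closed Dirichlet-kernel form together with $|\sin(lu)|\le l|\sin u|$; everything else is the block-diagonal observation and the sign-flip congruence.
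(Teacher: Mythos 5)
Your proof is correct, and it takes a genuinely different route from the paper's on two of the three claims, so the comparison is worth recording.

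For the first matrix, the paper simply \emph{cites} Lemma 3.10 of \cite{ChenDeng2017} and does not reproduce an argument. You instead give a self-contained Toeplitz-symbol proof: you identify $L^M_j=T_M(2-2\cos(j\theta))$, so the first matrix is $T_M(\Phi_l)$ with $\Phi_l(\theta)=\frac{2}{l}\sum_{j=1}^l\bigl(2-2\cos(j\theta)\bigr)-(2-2\cos\theta)$, and reduce to the Fej\'er-type inequality $2\sum_{j=1}^l\sin^2(j\psi)\ge l\sin^2\psi$, which you establish via the Dirichlet kernel and $|\sin(lu)|\le l|\sin u|$. (Your algebra checks out: $2\sin^2(j\psi)-\sin^2\psi=\cos^2\psi-\cos(2j\psi)$, and the resulting inequality $\sin\psi(1-\cos(2l\psi))+\cos\psi(l\sin 2\psi-\sin 2l\psi)\ge 0$ holds termwise on $[0,\pi/2]$. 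One tiny quibble: the reduction to $\psi\in[0,\pi/2]$ needs only that $\Phi_l$ is even and $2\pi$-periodic in $\theta$, not ``$\pi$-periodicity''.) This has the advantage of being independent of the cited lemma and in fact matches the spectral viewpoint the paper mentions only in the \emph{Remark} following the lemma, where $L^M_j=T_M(2-2\cos(j\theta))$ is noted but not used in the proof.

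For the second matrix you argue exactly as the paper does: it is block diagonal with the two scalar cases on the diagonal. For the third matrix, $2\mathcal{L}_1-L^{M+N}_1$, your argument differs from the paper's. The paper observes that this matrix is irreducible, symmetric, weakly diagonally dominant with positive diagonal, and invokes Lemma \ref{swlemma1}. You instead note the block decomposition $L^{M+N}_1=\bigl[\begin{smallmatrix}L^M_1 & E\\ E^T & L^N_1\end{smallmatrix}\bigr]$ with a single-entry coupling block $E$, so that $2\mathcal{L}_1-L^{M+N}_1=\bigl[\begin{smallmatrix}L^M_1 & -E\\ -E^T & L^N_1\end{smallmatrix}\bigr]=P\,L^{M+N}_1\,P$ with $P=\mathrm{diag}(I_M,-I_N)$. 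Since $P$ is orthogonal, this is a congruence, and positive definiteness is inherited from $L^{M+N}_1$. This sign-flip congruence is arguably cleaner than the diagonal-dominance/irreducibility check and gives the sharper structural information that $2\mathcal{L}_1-L^{M+N}_1$ has \emph{exactly the same spectrum} as the usual discrete Laplacian $L^{M+N}_1$. Both proofs are sound; yours is more self-contained and slightly more informative.
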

\begin{proof}
The first results of this lemma can be seen in Lemma 3.10 of \cite{ChenDeng2017},
which implies that  the second one is also satisfied,  since
\begin{equation*}
\begin{split}
\left( \frac{2}{l}\sum\limits_{j=1}^l \mathcal{L}_j\right)-  \mathcal{L}_1
 &=\frac{2}{l}\sum^l_{j=1}\left[ \begin{array}{cc}  {L}^M_j  & ~ \mathbf{0}   \\
                            \mathbf{0}   &~  L^N_j  \end{array}\right]
 -\left[ \begin{array}{cc}  {L}^M_1  & ~ \mathbf{0}   \\
                            \mathbf{0}   &~  L^N_1  \end{array}\right]\\
& =\left[ \begin{array}{cc} \left( \frac{2}{l}\sum^l_{j=1}{L}^M_j\right)-{L}^M_1  & ~ \mathbf{0}   \\
                            \mathbf{0}   &~  \left(\frac{2}{l}\sum^l_{j=1}L^N_j\right)-L^N_1  \end{array}\right].
\end{split}
\end{equation*}
On the other hand, we can check that $2 \mathcal{L}_1- L^{M+N}_1$  is an irreducible and  weakly diagonally dominant symmetric matrix,
which  means that it is positive definite by   Lemma \ref{swlemma1}.
The proof is completed.
\end{proof}

\begin{remark}
In order to understand the spectral features of the matrices considered in Lemma  \ref{lem4.7}, we can adopt the analysis via the related generating functions, since all the matrices in Lemma  \ref{lem4.7} are of real symmetric banded Toeplitz type, so the admit real-valued trigonometric polynomials as generating functions (see \cite{Serra:02} and references therein): on the other hand the matrices $\mathcal{L}_j$ are block diagonal and hence their spectral analysis reduces to the Toeplitz setting.  For instance, according to the notation in \cite{Serra:02}, $L^M_{j}=T_M(2-2\cos(j\theta))$ that is the function $2-2\cos(j\theta)$ is the generating function of $L^M_{j}$. From classical results, we know that $T_M(f)$ is positive definite for any matrix-size $M$ if $f$ is essentially bounded and nonnegative, with positive essential supremum. In the present setting,  the maximum of $2-2\cos(j\theta)$ is 4 and its minimum is zero and hence $L^M_{j}=T_M(2-2\cos(j\theta))$ is positive definite. Not only this: if the nonnegative generating function $f$ has a unique zero of order $\alpha>0$ then the minimal eigenvalue of $T_M(f)$ is positive converges monotonically to zero as $c/M^\alpha$ with $c$ depending on the second derivative of $f$ at the zero if it is positive. Based on these tools, we can deduce that  $L^M_{j}$ is positive definite, has minimal eigenvalue positive converging monotonically to zero as $c_j/M^2$ with positive $c_j$ independent of $M$, and $c_j$ related to the second derivative of $2-2\cos(j\theta)$ at $\theta=0$.

Of course, by linearity, the Toeplitz matrix $\left(\frac{2}{l}\sum\limits_{j=1}^lL^M_{j}\right)-L^M_{1}$ has generating function given by
$f_l(\theta)\equiv \left(\frac{2}{l}\sum\limits_{j=1}^l\textcolor{black}{ \left[2-2\cos(j\theta)\right]} \right)-(2-2\cos(\theta))$
and hence by studying this generating function, we deduce that
$$\left(\frac{2}{l}\sum\limits_{j=1}^lL^M_{j}\right)-L^M_{1}$$
is positive definite, has minimal eigenvalue positive converging monotonically to zero as $c/M^2$ with positive $c$ independent of $M$.  Hence its condition number grow exactly as $M^2$ and since the related generating function has a unique zero of order $2$ at $\theta=0$, there is a formal justification in using standard projectors and restriction operators, like those employed in the standard AMG, for the classical discrete Laplacian (see \cite {Fiorentino:96,Arico:07,Serra:02} and references therein).
\end{remark}

\begin{lemma}\label{lem 4.6 smooth}
Let $\mathcal{A}_K:=\mathcal{A}^S_{h}$ be defined by  \eqref{eq: system1}.
Then the damped Jacobi iteration
with relaxation parameter $0 < \omega \leq 1 $ satisfies
\begin{equation*}
 ||S_K\nu^K||_{\mathcal{A}_K }^2 \leq  ||\nu^K||_{\mathcal{A}_K }^2 - \frac{1}{2} ||\mathcal{A}_K\nu^K||_{\mathcal{D}_K^{-1}}^2 \quad  \forall \nu^K \in \mathfrak{M}^{K}.
\end{equation*}
\end{lemma}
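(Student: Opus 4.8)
The plan is to invoke Lemma \ref{lemma4.7} with the specific choice $\eta = 1/2$ and $\omega_{\rm pre}=\omega \in (0,1]$. By Lemma \ref{lemma4.7}, it suffices to exhibit an upper bound $\eta_0 \geq \lambda_{\max}(\mathcal{D}_K^{-1}\mathcal{A}_K)$ for which the inequality $\eta \leq \omega(2-\omega\eta_0)$ holds with $\eta=1/2$ for every $\omega\in(0,1]$, and such that $0<\omega<2/\eta_0$ is satisfied. First I would establish that $\lambda_{\max}(\mathcal{D}_K^{-1}\mathcal{A}_K)\leq 2$, i.e. one may take $\eta_0=2$. This is the arithmetic heart of the argument: since $\mathcal{A}_K$ is symmetric (Lemma \ref{adddlemma4.1}) and $\mathcal{D}_K$ is its positive diagonal part, $\mathcal{D}_K^{-1}\mathcal{A}_K$ is similar to the symmetric matrix $\mathcal{D}_K^{-1/2}\mathcal{A}_K\mathcal{D}_K^{-1/2}$, so its eigenvalues are real and bounded by the row sums of $|\mathcal{D}_K^{-1/2}\mathcal{A}_K\mathcal{D}_K^{-1/2}|$ via Gershgorin. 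By weak diagonal dominance (Lemma \ref{adddlemma4.1}) the sum of the absolute values of the off-diagonal entries in each row is at most the diagonal entry $a_0$ (the common diagonal value, up to the boundary modification that only decreases off-diagonal mass), so each Gershgorin disk is centered at $1$ with radius at most $1$, giving $\lambda_{\max}(\mathcal{D}_K^{-1}\mathcal{A}_K)\leq 2$; positive definiteness (Lemma \ref{cdlemma4.2}) additionally gives the lower bound $0$, hence $2/\eta_0 = 1 \geq \omega$ so the admissibility constraint $0<\omega<2/\eta_0$ is met for $\omega\in(0,1]$ (with the endpoint covered by the non-strict inequality in the statement).

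Next I would verify the constraint $\eta \leq \omega(2-\omega\eta_0)$ with $\eta=1/2$ and $\eta_0=2$, i.e. $\tfrac12 \leq 2\omega - 2\omega^2 = 2\omega(1-\omega)$. This is false for $\omega$ near $0$ or near $1$ — indeed $2\omega(1-\omega)$ has maximum value $1/2$ attained only at $\omega = 1/2$ — so a naive application of Lemma \ref{lemma4.7} does \emph{not} immediately give the claimed bound for all $\omega \in (0,1]$. The correct route is therefore to use the sharper bound $\lambda_{\max}(\mathcal{D}_K^{-1}\mathcal{A}_K) \leq 1$, not merely $\leq 2$: here one uses that $\mathcal{A}_K$, being weakly diagonally dominant with the exact relation $a_0 + 2\sum_{m=1}^r a_m + 2\sum_{m=0}^{r-1} a_{m+1/2} = 0$ (shown in the proof of Lemma \ref{adddlemma4.1}) means the off-diagonal row sums of $\mathcal{D}_K^{-1}\mathcal{A}_K$ equal $-1$ in the bulk and lie in $(-1,0]$ near the boundary, so by Gershgorin $\lambda_{\max}(\mathcal{D}_K^{-1}\mathcal{A}_K) \leq 1$. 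With $\eta_0 = 1$, the constraint becomes $\tfrac12 \leq \omega(2-\omega) = 1-(1-\omega)^2$, which holds precisely when $(1-\omega)^2 \leq 1/2$; this is \emph{still} not all of $(0,1]$.

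Given this, the cleanest and most likely intended argument is a direct one that does not route through Lemma \ref{lemma4.7} at all. I would write $\mathcal{S}_K = I - \omega \mathcal{D}_K^{-1}\mathcal{A}_K$ and compute, for arbitrary $\nu := \nu^K$,
\begin{equation*}
\|\mathcal{S}_K\nu\|_{\mathcal{A}_K}^2 = \|\nu\|_{\mathcal{A}_K}^2 - 2\omega(\mathcal{A}_K\nu, \mathcal{D}_K^{-1}\mathcal{A}_K\nu) + \omega^2(\mathcal{A}_K \mathcal{D}_K^{-1}\mathcal{A}_K\nu, \mathcal{D}_K^{-1}\mathcal{A}_K\nu).
\end{equation*}
Writing $z := \mathcal{D}_K^{-1/2}\mathcal{A}_K\nu$ and $B := \mathcal{D}_K^{-1/2}\mathcal{A}_K\mathcal{D}_K^{-1/2}$ (symmetric, with $0 \prec B \preceq I$ by the Gershgorin bound above and Lemma \ref{cdlemma4.2}), this equals $\|\nu\|_{\mathcal{A}_K}^2 - 2\omega\|z\|^2 + \omega^2(Bz,z) = \|\nu\|_{\mathcal{A}_K}^2 - (2\omega - \omega^2(Bz,z)/\|z\|^2)\|z\|^2$, and since $(Bz,z) \leq \|z\|^2$ and $\|z\|^2 = \|\mathcal{A}_K\nu\|_{\mathcal{D}_K^{-1}}^2$, we get $\|\mathcal{S}_K\nu\|_{\mathcal{A}_K}^2 \leq \|\nu\|_{\mathcal{A}_K}^2 - (2\omega - \omega^2)\|\mathcal{A}_K\nu\|_{\mathcal{D}_K^{-1}}^2$. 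Finally $2\omega - \omega^2 = 1-(1-\omega)^2 \geq 1/2$ for $\omega \in [1-\tfrac{1}{\sqrt2}, 1]$ — so to cover the whole range $(0,1]$ one actually needs $2\omega-\omega^2 \geq \tfrac12$, which forces $\omega \geq 1-\tfrac1{\sqrt2} \approx 0.29$. The honest statement is that the constant $\tfrac12$ holds for $\omega$ not too small; the main obstacle is precisely this: reconciling the claimed uniform constant $\tfrac12$ with the elementary fact that damped Jacobi smoothing degrades as $\omega \to 0$. I expect the paper either intends $\omega$ bounded away from $0$, or uses the tighter spectral bound $\|B\| \leq 1$ together with a per-row Gershgorin estimate that, for the particular boundary structure of \eqref{eq: system1}, yields a strictly smaller effective constant making $2\omega - \omega^2(Bz,z)/\|z\|^2 \geq \tfrac12$ throughout $(0,1]$; resolving which of these holds for the matrix at hand is the step I would scrutinize most carefully.
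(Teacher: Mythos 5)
Your diagnosis that the stated constant $\tfrac12$ cannot hold uniformly for all $\omega\in(0,1]$ under the eigenvalue bound $\lambda_{\max}(\mathcal D_K^{-1}\mathcal A_K)\le 2$ is correct, and the paper's own proof does not repair this: the paper establishes precisely $\lambda_{\max}(\mathcal D_K^{-1}\mathcal A_K)\in[1,2]$ (lower bound by the Rayleigh quotient with $x=e_1$, upper bound by Gershgorin applied to the weakly diagonally dominant $\mathcal A_K$), shows that $2\mathcal D_K-\mathcal A_K$ is SPD so that plain Jacobi converges, and concludes that the damped Jacobi iteration \emph{converges} for $0<\omega\le 1$. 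Nowhere does it actually verify the hypothesis $\eta\le\omega(2-\omega\eta_0)$ of Lemma~\ref{lemma4.7} with $\eta=\tfrac12$; convergence of the smoother is not the smoothing property. You are therefore right to scrutinize this step, and your arithmetic showing $2\omega(1-\omega)\le\tfrac12$ with equality only at $\omega=\tfrac12$ is exactly the obstruction.

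However, your attempted repair contains a genuine error: the claim $\lambda_{\max}(\mathcal D_K^{-1}\mathcal A_K)\le 1$ does not follow from Gershgorin. The off-diagonal entries of $\mathcal D_K^{-1}\mathcal A_K$ are nonpositive, but the Gershgorin radius is the sum of their \emph{absolute values}, which for a bulk row equals $1$. The disks are therefore centered at $1$ with radius $\le 1$, giving $\lambda\in[0,2]$, not $\lambda\in[0,1]$. (Indeed, for the tridiagonal Laplacian $D^{-1}L$ has eigenvalues $1-\cos(k\pi/(N+1))$ that approach $2$.) Consequently, in your direct computation the correct operator bound is $0\prec B\preceq 2I$, which yields $\|\mathcal S_K\nu\|^2_{\mathcal A_K}\le\|\nu\|^2_{\mathcal A_K}-(2\omega-2\omega^2)\|\mathcal A_K\nu\|^2_{\mathcal D_K^{-1}}$, reproducing the same constraint $2\omega(1-\omega)\ge\tfrac12$ rather than the more generous $2\omega-\omega^2\ge\tfrac12$. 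Your final verdict is nonetheless sound: as written the lemma holds with $\eta=\tfrac12$ only at $\omega=\tfrac12$ (and with $\eta$ slightly larger when $\lambda_{\max}<2$ strictly); for general $\omega\in(0,1]$ one should either take $\eta=2\omega(1-\omega)$ (which goes to $0$ at both endpoints) or restrict $\omega$. The paper's choice $(\omega_{\rm pre},\omega_{\rm post})=(1,\tfrac12)$ in the numerics suggests the intended smoothing analysis is really only being invoked for the $\omega=\tfrac12$ sweep.
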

\begin{proof}
According to  \eqref{eq: system1} and \eqref{coeeq: system1}, we know that the matrix $\mathcal{A}_K$ is a weakly diagonal dominant, since
\begin{equation*}
a_{0}+2\sum_{m=1}^{r}a_{m}+2\sum_{m=0}^{r-1}a_{m+\frac{1}{2}}=0.
\end{equation*}
Taking  $\mathcal{A}_K=\left\{a_{i,j}^{(K)}\right\}_{i,j=1}^{\infty}$ with $a_{i,j}^{(K)}=a_{|j-i|}^{(K)}$, it yields
$$ r_i^{(K)}:= \sum\limits_{j\neq i} |a_{i,j}^{(K)}| \leq  a_{i,i}^{(K)}=a_0.$$
Using the Gerschgorin circle theorem \cite[p.\,388]{Horn:13}, the eigenvalues of  $\mathcal{A}_K$ are in the disks centered at $a_{i,i}^{(K)}$ with radius
$r_i^{(K)}$. Namely,  the eigenvalues $\lambda$ of the matrix  $\mathcal{A}_K$ satisfy
$$  |\lambda -a_{i,i}^{(K)} | \leq r_i^{(K)}, $$
which leads to
$\lambda_{\max}(\mathcal{A}_K) \leq a_{i,i}^{(K)}+r_i^{(K)}< 2a_{i,i}^{(K)}=2a_{1,1}^{(K)}=2a_0 $.

From   Rayleigh theorem  \cite[p.\,235]{Horn:13}, we deduce
$$\lambda_{\max}(\mathcal{A}_{K})=\max_{x\neq 0}\frac{x^T\mathcal{A}_{K}x}{x^Tx}\quad\forall x \in  \mathbb{{R}}^n.$$
Take  $x=[1,0,\ldots,0]^T$. Then
$$\lambda_{\max}(\mathcal{A}_{K})\geq \frac{x^T\mathcal{A}_{K}x}{x^Tx}=a_{1,1}^{(K)}=a_0,$$
and
$$\lambda_{\max}\left(\left(\mathcal{D}_{K}\right)^{-1}\mathcal{A}_{K}\right)=\frac{\lambda_{\max}(\mathcal{A}_{K})}{a_{1,1}^{(K)}}= \frac{\lambda_{\max}(\mathcal{A}_{K})}{a_0}\in [1,2],$$
where  $\mathcal{D}_{K}$ is the diagonal of $\mathcal{A}_{K}$.

From Lemma \ref{xu96}, the Damped Jacobi method converges with $0<\omega<1$.
By the similar proof process  of Lemma \ref{cdlemma4.2}, we can check that $2\mathcal{D}_K-\mathcal{A}_K$ is symmetric positive definite.
Then using Lemma \ref{xu96} again, the Jacobi method converges.
Hence,  the damped Jacobi iteration
with relaxation parameter $0 < \omega \leq 1 $ converges.
The proof is completed.
\end{proof}

\begin{lemma}\label{adlm4.9}
Let $\mathcal{A}_K=\mathcal{A}_{h}$ be defined by \eqref{eq: system1}. Then
\begin{equation*}
   \min_{\nu^{K-1} \in \mathfrak{M}^{K-1} }||\nu^{K}-I_{K-1}^K\nu^{K-1}||_{\mathcal{D}_K}^2\leq \frac{1}{24} ||\nu^K||_{\mathcal{A}_K}^2 \quad  \forall \nu^K \in \mathfrak{M}^{K}.
\end{equation*}
\end{lemma}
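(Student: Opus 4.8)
The plan is to verify the approximation property \eqref{{ccs4.3}} in the classical Ruge--Stüben style (cf. \cite[p.\,89]{RugeSIAM1987}, \cite{XuMMEPDES1996}): since the left-hand side is a minimum over the coarse space, it suffices to exhibit one admissible $\nu^{K-1}\in\mathfrak{M}^{K-1}$ and to estimate the defect $e:=\nu^{K}-I^{K}_{K-1}\nu^{K-1}$. The natural choice is the pointwise injection of $\nu^{K}$ onto the coarse nodes (the even-indexed entries in the ordering underlying $I_{K}^{K-1}$); since $I^{K}_{K-1}=2(I_{K}^{K-1})^{T}$ is the standard linear interpolation, $e$ then vanishes at every coarse node and, at a fine-only node $p$ with neighbouring coarse values $\nu^{K}_{p-1},\nu^{K}_{p+1}$, equals the discrete second difference $e_{p}=\nu^{K}_{p}-\tfrac12(\nu^{K}_{p-1}+\nu^{K}_{p+1})$, with the obvious reduced stencils at the two ends.

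First I would bound $\|e\|_{\mathcal{D}_{K}}^{2}$ from above. By \eqref{coeeq: system1} the diagonal of $\mathcal{A}_{K}=\mathcal{A}^{S}_{h}$ is the constant $a_{0}=12r-2$, so $\mathcal{D}_{K}=a_{0}I$ and $\|e\|_{\mathcal{D}_{K}}^{2}=a_{0}\sum_{p}e_{p}^{2}$, the sum over fine-only nodes only. Using $\bigl(x-\tfrac12(y+z)\bigr)^{2}\le\tfrac12\bigl((x-y)^{2}+(x-z)^{2}\bigr)$ on each term and noting that the nearest-neighbour increment pairs attached to distinct fine-only nodes are pairwise disjoint, one gets $\|e\|_{\mathcal{D}_{K}}^{2}\le\tfrac{a_{0}}{2}\bigl(\sum_{\text{edges}}(\nu^{K}_{j+1}-\nu^{K}_{j})^{2}+\text{a few boundary squares }(\nu^{K}_{j})^{2}\bigr)$, each edge being a consecutive node pair in that ordering.

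Next I would bound $\|\nu^{K}\|_{\mathcal{A}_{K}}^{2}=(\mathcal{A}_{K}\nu^{K},\nu^{K})$ from below. By Lemma \ref{adddlemma4.1}, $\mathcal{A}_{K}$ has positive diagonal $a_{0}$, nonpositive off-diagonal entries and is weakly diagonally dominant, so $(\mathcal{A}_{K}\nu,\nu)=\sum_{i<j}|(\mathcal{A}_{K})_{ij}|(\nu_{i}-\nu_{j})^{2}+\sum_{i}d_{i}(\nu_{i})^{2}$ with $d_{i}:=(\mathcal{A}_{K})_{ii}-\sum_{j\ne i}|(\mathcal{A}_{K})_{ij}|\ge0$; equivalently one may start from the splitting $\mathcal{A}_{K}=\mathcal{A}_{\rm res}-a_{1}\mathcal{A}_{\rm main}$ of the proof of Lemma \ref{cdlemma4.2} (with $\mathcal{A}_{\rm res}\succeq0$ and $\mathcal{A}_{\rm main}={\rm diag}(L_{N-1},L_{N})$) together with the integer/half-integer cross-coupling terms, using Lemma \ref{lem4.7} to compare the decoupled and the coupled discrete Laplacians. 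Retaining only the nearest-neighbour couplings and the boundary excess terms $d_{i}$ yields $(\mathcal{A}_{K}\nu,\nu)\ge c\bigl(\sum_{\text{edges}}(\nu^{K}_{j+1}-\nu^{K}_{j})^{2}+\sum_{\text{boundary}}(\nu^{K}_{j})^{2}\bigr)$ for an explicit $c$, and combining with the previous estimate gives \eqref{{ccs4.3}} with the stated constant.

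The step I expect to be the main obstacle is the precise matching between the increments appearing in $e$ and those available in the weak-diagonal-dominance representation of $(\mathcal{A}_{K}\nu,\nu)$. In the interior this matching is exact; near $x_{1/2}$ and $x_{N-1/2}$ one must handle the reduced, non-Toeplitz boundary rows, and at the node where the restriction stencil straddles the $v/w$ block interface the defect involves a difference of values that are not directly coupled in $\mathcal{A}_{K}$, so it has to be absorbed through $(\nu_{i})^{2}$-type terms paid for by the corresponding boundary excesses $d_{i}$. For $r>1$ one also has to check that discarding the longer-range couplings $a_{m},a_{m+1/2}$, $m\ge1$, still leaves enough to dominate $e$; finally, admissibility of the chosen $\nu^{K-1}$ (dimension $2N_{K-1}+1=N_{K}$) is immediate for the injection. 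Tracking all these constants through the two estimates is what produces the factor $\tfrac1{24}$.
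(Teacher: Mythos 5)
Your overall plan---exhibit the injection coarse vector, bound the defect by nearest-neighbour increments via Cauchy--Schwarz, then lower-bound $(\mathcal{A}_K\nu,\nu)$ by the same increments---is the right Ruge--St\"uben template, and it is also the template the paper uses implicitly in its last step. But the route you sketch for the lower bound has a genuine gap that, as written, would not produce the stated constant (whichever way one resolves the $1/24$ vs.\ $24$ ambiguity in the lemma's display; $\mu=24$ is what is consistent with Lemma~\ref{lemma4.8} and Theorem~\ref{the1}).

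The issue is $r$-dependence. With $\mathcal{D}_K=a_0 I$ and $a_0=12r-2$, the defect bound gives $\|e\|_{\mathcal{D}_K}^2\lesssim \tfrac{a_0}{2}\sum(\nu_{j+1}-\nu_j)^2$, so to obtain an $r$-\emph{independent} $\mu$ you must show $(\mathcal{A}_K\nu,\nu)\gtrsim \tfrac{a_0}{48}\sum(\nu_{j+1}-\nu_j)^2\sim \tfrac{r}{4}\sum(\nu_{j+1}-\nu_j)^2$. Your two proposed routes for this step both fall short. (i) ``Retaining only the nearest-neighbour couplings'' in the weak-diagonal-dominance identity $(\mathcal{A}_K\nu,\nu)=\sum_{i<j}|(\mathcal{A}_K)_{ij}|(\nu_i-\nu_j)^2+\sum_i d_i\nu_i^2$ keeps only the weight $|a_1|=2$ on each indexed edge, independent of $r$, so the resulting $c$ in your inequality is $O(1)$ and the final $\mu$ comes out $O(r)$, not $O(1)$. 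The longer-range couplings $a_m,a_{m+1/2}$ cannot simply be discarded; nor can they be used pointwise, because a single long-range increment $(\nu_{i+j}-\nu_i)^2$ does not dominate short-range increments. (ii) The splitting $\mathcal{A}_K=\mathcal{A}_{\rm res}-a_1\mathcal{A}_{\rm main}$ from Lemma~\ref{cdlemma4.2}, together with only the second comparison $2\mathcal{L}_1\succeq L_1^{2N-1}$ from Lemma~\ref{lem4.7}, gives $(\mathcal{A}_K\nu,\nu)\ge -\tfrac{a_1}{2}(L_1^{2N-1}\nu,\nu)$, hence again only an $O(1)$ multiple of $\sum(\nu_{j+1}-\nu_j)^2$ and $\mu=O(r)$.

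What the paper's proof actually relies on---and what is missing from your plan---is the \emph{full} decomposition $\mathcal{A}_K=-\sum_{j=1}^r a_j\mathcal{L}_j+\mathcal{B}$ from \eqref{ccs4.6} with $\mathcal{B}\succeq 0$, combined with the \emph{first} inequality of Lemma~\ref{lem4.7}, $\tfrac{2}{r}\sum_{j=1}^r\mathcal{L}_j\succeq\mathcal{L}_1$. This aggregation over all $r$ Laplacian-like shifts is what produces the crucial factor $r/2$ in front of $\mathcal{L}_1$, and thus cancels the linear-in-$r$ growth of $a_0$ in the diagonal scaling: one gets $(\mathcal{A}_K\nu,\nu)\ge\tfrac{a_0}{48}(L_1^{2N-1}\nu,\nu)$ \emph{uniformly in $r$}, after which the standard 1D estimate $(L_1^{2N-1}\nu,\nu)\ge 2\|\nu^K-I_{K-1}^K\nu^{K-1}\|^2$ closes the argument exactly as in your plan. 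The block-interface difficulty you flag (the missing indexed edge between the last $v$-node and the first $w$-node) is handled in the paper not by bookkeeping boundary excesses $d_i$ but by the second inequality of Lemma~\ref{lem4.7}, $2\mathcal{L}_1\succeq L_1^{M+N}$, which supplies the missing edge at the cost of a harmless factor $2$. So the obstacle you identify as ``the main obstacle'' is actually the easy part; the part you propose to discard---the long-range couplings---is precisely the part that must be used, via the first inequality of Lemma~\ref{lem4.7}, to obtain an $r$-independent constant.
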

\begin{proof}
 Let discrete  block-structured Laplacian-like operators be defined by
\begin{equation}\label{{ccs4.5}}
\mathcal{L}_j = \left[ \begin{array}{cc}  {L}^{N-1}_j  & ~ \mathbf{0}   \\
                                                           \mathbf{0}   &~  L^N_j  \end{array}\right]
                   = \left[ \begin{array}{cc}  {L}^{N-1}_j  &~  \mathbf{0}   \\
                                                           \mathbf{0}   &  ~ \mathbf{0}   \end{array}\right]
                   + \left[ \begin{array}{cc}  \mathbf{0}  &  ~\mathbf{0}   \\
                                                           \mathbf{0}   &  ~L^N_j  \end{array}\right],   ~~j=1,2,\dots,r,
\end{equation}
where the discrete Laplacian-like operators $\{L^{N-1}_j\}^{r}_{j=1}\in\mathds{R}^{(N-1)\times (N-1)}$, $\{L^N_j\}^{r}_{j=1}\in\mathds{R}^{N\times N}$ are  given in  Lemma  \ref{lem4.7}.
%with $r$ associated to the bandwidth of matrix $A$ or $\hat{A}$.

From \eqref{eq: system1}, the block-structured dense matrix $\mathcal{A}_K$ can be denoted  as the series sum with  Laplacian-like operators $\mathcal{L}_j $, i.e.,
\begin{equation}\label{ccs4.6}
\mathcal{A}_K = \left[\begin{array}{cc} A & B\\ B^T & \widehat{A} \end{array}\right]
%=\left[\begin{array}{cc} A &  \mathbf{0}\\  \mathbf{0} & \widehat{A} \end{array}\right]
%+\left[\begin{array}{cc}  \mathbf{0} & B\\ B^T &  \mathbf{0} \end{array}\right]
= -\sum^r_{j=1}a_j\mathcal{L}_j  + \mathcal{B}
~~{\rm and}~~\mathcal{B} = \left(a_0 + 2\sum^r_{j=1}a_j \right)I + \left[\begin{array}{cc} \mathbf{0} & ~B\\ B^T & ~\mathbf{0} \end{array}\right]
\end{equation}
with $I$ an identity matrix.

Using  \eqref{coeeq: system1}, it yields
$
a_{0}+2\sum_{m=1}^{r}a_{m}+2\sum_{m=0}^{r-1}a_{m+\frac{1}{2}}=0,
$
 which implies that  $ \mathcal{B}$ is a weakly diagonally dominant symmetric matrix with positive entries on the diagonal and nonpositive off-diagonal entries.
Then $ \mathcal{B}$ is symmetric and semi-positive definite by applying the Gerschgorin circle theorem.

From  \eqref{ccs4.6}, \eqref{coeeq: system1} and Lemma \ref{lem4.7}, we obtain
\begin{equation*}
\begin{split}
\left|\left| \nu^K \right|\right|_{\mathcal{A}_K}& =\left( \mathcal{A}_K\nu^K, \nu^K\right)\geq -\frac{a_1}{2}\left( \sum^r_{j=1}\mathcal{L}_j\nu^K,\nu^K \right)
  \geq -\frac{ra_1}{4}\left( \mathcal{L}_1\nu^K,\nu^K \right) \\
 & \geq -\frac{ra_1}{8}\left( L^{2N-1}_1\nu^K,\nu^K \right) \geq \frac{a_0}{48}\left(L^{2N-1}_1\nu^K,\nu^K \right)
   \geq \frac{a_0}{24} \left|\left| \nu^K - I^K_{K-1}\nu^K \right|\right|^2\\
&   = \frac{1}{24} \left|\left| \nu^K - I^K_{K-1}\nu^J \right|\right|^2_{\mathcal{D}_K}.
\end{split}
\end{equation*}
The proof is completed.
\end{proof}

\begin{theorem}\label{the1}
Let $\mathcal{A}_K=\mathcal{A}_{h}$ be defined by \eqref{eq: system1}.
Then    the convergence factor of the TGM satisfies
\begin{equation*}
 \left|\left|S_KT_K\right|\right|_{\mathcal{A}_K}\leq\sqrt{47/48}<1.
\end{equation*}
\end{theorem}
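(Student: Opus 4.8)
The plan is to combine the abstract two-grid convergence framework of Lemma~\ref{lemma4.8} with the two concrete estimates just established. Lemma~\ref{lemma4.8} states that if the smoother satisfies the smoothing property \eqref{{ccs4.2}} with constant $\eta$ and the intergrid transfer satisfies the approximation property \eqref{{ccs4.3}} with constant $\mu$, then automatically $\mu \geq \eta > 0$ and
\begin{equation*}
  \|S_K T_K\|_{\mathcal{A}_K} \leq \sqrt{1 - \eta/\mu}.
\end{equation*}
So the proof is essentially an assembly: I only need to read off the values of $\eta$ and $\mu$ from the lemmas proved in this section, substitute, and check the hypotheses of Lemma~\ref{lemma4.8} are met.

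The first step is to invoke Lemma~\ref{lem 4.6 smooth}, which says that for the matrix $\mathcal{A}_K := \mathcal{A}^S_h$ of \eqref{eq: system1} the damped Jacobi iteration with $0 < \omega \leq 1$ satisfies
\begin{equation*}
 \|S_K\nu^K\|_{\mathcal{A}_K}^2 \leq \|\nu^K\|_{\mathcal{A}_K}^2 - \tfrac{1}{2}\|\mathcal{A}_K\nu^K\|_{\mathcal{D}_K^{-1}}^2 \qquad \forall \nu^K \in \mathfrak{M}^K,
\end{equation*}
which is exactly the smoothing property \eqref{{ccs4.2}} with $\eta = 1/2$. The second step is to invoke Lemma~\ref{adlm4.9}, which gives
\begin{equation*}
 \min_{\nu^{K-1}} \|\nu^K - I^K_{K-1}\nu^{K-1}\|_{\mathcal{D}_K}^2 \leq \tfrac{1}{24}\|\nu^K\|_{\mathcal{A}_K}^2 \qquad \forall \nu^K \in \mathfrak{M}^K,
\end{equation*}
that is, the approximation property \eqref{{ccs4.3}} with $\mu = 1/24$. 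I should also note that $\mathcal{A}_K$ is symmetric positive definite (Lemma~\ref{cdlemma4.2}), so the hypotheses of Lemma~\ref{lemma4.8} are satisfied; one can moreover sanity-check the consistency requirement $\mu \geq \eta$ — here $1/24 \geq 1/2$ is \emph{false}, which signals that the constants cannot be plugged in naively and one of them must be rescaled.

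This mismatch is the only genuine subtlety, so it is where I would be most careful. The resolution is that the smoothing constant $\eta$ in \eqref{{ccs4.2}} is not forced to equal $1/2$: Lemma~\ref{lemma4.7} (via Lemma~\ref{lemma4.8}'s derivation, or directly from the relaxation freedom $\eta \le \omega(2-\omega\eta_0)$ in Lemma~\ref{lemma4.7}) lets us take any $\eta \le 1/2$, and in particular any $\eta \le \mu = 1/24$. Choosing $\eta = 1/24$ makes the smoothing property hold a fortiori (the right-hand side only gets larger) and makes it compatible with the approximation property. Then Lemma~\ref{lemma4.8} yields
\begin{equation*}
  \|S_K T_K\|_{\mathcal{A}_K} \leq \sqrt{1 - \eta/\mu} = \sqrt{1 - (1/24)/(1/24)}
\end{equation*}
which would give $0$, not $\sqrt{47/48}$ — so in fact the intended bookkeeping must be the reverse: keep $\mu$ as the \emph{larger} admissible constant. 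Re-reading Lemma~\ref{lemma4.8}, the statement is $\mu \ge \eta$, so I must instead enlarge $\mu$ to at least $1/2$; taking $\mu = 1/2$ (which is legitimate since \eqref{{ccs4.3}} with a larger constant is weaker) together with $\eta = 1/2$ would again give $0$, so the honest reading is $\eta = 1/2$ from Lemma~\ref{lem 4.6 smooth} but $\mu$ must absorb the factor: with $\mu$ replaced by $24$ on the right of \eqref{{ccs4.3}} — i.e. the approximation constant is actually $1/24$ on the \emph{reciprocal} side — the ratio is $\eta/\mu = (1/2)/(24) \cdot (\text{scaling})$. The cleanest way to get exactly $\sqrt{47/48}$ is $\eta/\mu = 1/48$ with $\eta = 1/2$ and $\mu = 24$, equivalently recognizing that Lemma~\ref{adlm4.9} should be read as $\mu = 24$ (the approximation property as stated in \eqref{{ccs4.3}} has $\mu$ multiplying $\|\nu^K\|^2_{\mathcal{A}_K}$, and $1/24 \le \nu^K$-norm means $\mu = 1/24$ there, while Lemma~\ref{lemma4.8}'s $\sqrt{1-\eta/\mu}$ needs $\mu \ge \eta$, forcing the intended pairing $\eta = 1/48$, $\mu = 1/48$... ). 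I would therefore state the proof as: apply Lemma~\ref{lemma4.8} with $\eta$ from Lemma~\ref{lem 4.6 smooth} and $\mu$ from Lemma~\ref{adlm4.9}, obtaining $\|S_K T_K\|_{\mathcal{A}_K} \le \sqrt{1 - \eta/\mu} = \sqrt{1 - 1/48} = \sqrt{47/48} < 1$, and the one thing I must verify carefully is that the constants $\eta$ and $\mu$ are normalized consistently between \eqref{{ccs4.2}}, \eqref{{ccs4.3}} and the conclusion of Lemma~\ref{lemma4.8} so that the ratio comes out to $1/48$.
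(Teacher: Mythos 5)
Your final assembly is correct and is exactly the paper's proof: combine the smoothing estimate of Lemma~\ref{lem 4.6 smooth} (giving $\eta = 1/2$) with the approximation estimate of Lemma~\ref{adlm4.9}, and feed both into the abstract two-grid bound of Lemma~\ref{lemma4.8}. The confusion you worked through mid-proof is symptomatic of a genuine typo in the statement of Lemma~\ref{adlm4.9}: it reads $\leq \frac{1}{24}\|\nu^K\|^2_{\mathcal{A}_K}$, but the chain of inequalities in its own proof ends with $\|\nu^K\|^2_{\mathcal{A}_K} \geq \frac{1}{24}\|\nu^K - I^K_{K-1}\nu^{K-1}\|^2_{\mathcal{D}_K}$, which rearranges to the approximation property with $\mu = 24$, not $\mu = 1/24$. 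You correctly landed on this reading at the end: with $\eta = 1/2$ and $\mu = 24$ the requirement $\mu \geq \eta$ of Lemma~\ref{lemma4.8} holds, $\eta/\mu = 1/48$, and $\sqrt{1 - 1/48} = \sqrt{47/48}$. A cleaner write-up would reach this reading directly, by inspecting the proof of Lemma~\ref{adlm4.9} rather than by the sequence of reversals you recorded.
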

\begin{proof}
According to Lemma \ref{lemma4.7}, \ref{lemma4.8},  \ref{lem 4.6 smooth} and \ref{adlm4.9},
the desired result is obtained.
\end{proof}

\section{Numerical results}

We employ the V-cycle block-structured AMG  described in Algorithm  \ref{Algorithm2:structured V-cycle} to solve the time-dependent  nonlocal problems in Section 2.
The stopping criterion is taken as
$$\frac{||r^{(i)}||}{||r^{0}||}<10^{-15},$$
 where $r^{(i)}$ is the residual vector after $i$ iterations, and the number of iterations $(m_1,m_2)=(1,1)$ and $(w_{pre},w_{post})=(1,1/2)$.
 In all tables, $N$ denotes the number of spatial grid points, and the numerical errors are measured by the $l_{\infty}$ (maximum) norm, ''rate'' denotes the convergence orders, ''CPU'' denotes the total CPU time in seconds (s) for solving the discretized systems, ''Iter'' denotes the average number of iterations required to solve algebraic systems  at each time level.

All numerical experiments are programmed in Matlab, and the computations are carried out  on a desktop with the configuration:
Intel(R) Core(TM) i7-7700 3.60 GHZ and 8 GB RAM and a 64 bit Windows 10 operating system.

First we consider the time-dependent nonlocal/peridynamic models    \eqref{modelnonlocal} and \eqref{ccs2.4} in Section 2.
The initial value and the forcing term are chosen such that the exact solution of the considered equations is
\begin{equation*}
\label{periodic}
u(x,t)=e^t\left(1+x\right)^6,\quad 0\leqslant x \leqslant 1,\  0\leqslant t \leqslant 1.
\end{equation*}
We apply the following  BDF4  method to  such  nonlocal models
\begin{equation}\label{bdf4}
\begin{split}
{\rm BDF4~scheme}: &\left(\frac{25}{12}I - \tau \mathcal{A}\right)U^k = 4U^{k-1} - 3U^{k-2} + \frac{4}{3}U^{k-3} - \frac{1}{4}U^{k-4} + \tau F^{k}.
%{\rm BDF5~scheme}: &\left(\frac{137}{60}I - \tau \mathcal{A}\right)U^k = 5U^{k-1} - 5U^{k-2} + \frac{10}{3}U^{k-3} - \frac{5}{4}U^{k-4} + \frac{1}{5}U^{k-5} + \tau F^{k}.
\end{split}
\end{equation}
Here the operator $\mathcal{A}$ denotes the block-structured systems \eqref{3system}, \eqref{eq:nonsym system} and \eqref{eq: system1}, respectively.
It should be noted that the stability  and  convergence  analysis of time-dependent nonlocal problem \eqref{modelnonlocal} and \eqref{ccs2.4} can be seen in \cite{ACYZ:21,CQS:20,Chen:1--30,CCNWL:20}.
The convergence rate of the two-grid method for time-dependent  block-structured systems \eqref{eq: system1} can be directly obtained by Theorem \ref{the1} and \cite{CWCD:14}: here we omit the related derivation.

\subsection{Application in  nonlocal  model }
Table \ref{TT03b}  shows that  the BDF4 scheme \eqref{bdf4} for  time-dependent nonlocal model \eqref{modelnonlocal} has  the global {truncation error } $\mathcal {O}\left(\tau^4+h^{4-\gamma}\right)$
 and the computation cost is almost    $\mathcal{O}(N \mbox{log} N)$ operations.
\begin{table}[!th]
\vspace{-2mm}
\renewcommand{\captionfont}{\footnotesize}
\centering
\caption{Using  Galerkin approach    $\mathcal{A}_{k-1} = I_k^{k-1}\mathcal{A}_k I^k_{k-1}$ in \eqref{eq:block Galerkin} computed by Lemma \ref{lm3.1}  to solve the
 time-dependent nonlocal model \eqref{modelnonlocal} with $\tau=h=1/N$. }
\resizebox{\textwidth}{!}{
\begin{tabular}{ccccc|cccc|cccc}\hline%\noalign{\smallskip}
\multirow{2}*{$N$}&  \multicolumn{4}{c}{$\gamma=0$}   &\multicolumn{4}{c}{$\gamma=0.5$}        &\multicolumn{4}{c}{$\gamma=0.9$} \\
                          \cline{2-13}\noalign{\smallskip}
                 &Error     &Rate      &CPU     &Iter     &Error     &Rate    &CPU     &Iter       &Error           &Rate     &CPU      &Iter \\\hline
  $2^5$      &1.4460e-05    &~~~      &0.11s    &3      &1.9587e-05  &~~~~    &0.1167s   &3           &3.4809e-05  &~~~     &0.14s    &4 \\
  $2^6$      &9.7263e-07    &3.89       &0.39s    &3      &1.5160e-06  &3.6916    &0.4236s   &3         &3.5907e-06  &3.28     &0.47s    &4 \\
  $2^7$      &6.2993e-08    &3.95       &0.77s    &2      &1.1678e-07  &3.6984    &0.9123s   &3       &3.7667e-07  &3.25     &1.10s    &3  \\
  $2^8$      &3.9959e-09    &3.98       &2.26s    &2      &9.1285e-09  &3.6772    &2.1589s   &2        &4.0423e-08  &3.22     &2.94s    &3    \\\hline
\end{tabular}
}
\label{TT03b}
\end{table}

\subsection{Application in Peridynamic  model}
We further extend the V-cycle block-structured AMG   Algorithm  \ref{Algorithm2:structured V-cycle} to simulate the peridynamic model with
nonsymmetric indefinite   block-structured dense  systems and symmetric positive definite   block-structured dense  systems, respectively.
\subsubsection{ Nonsymmetric indefinite   block-structured dense  systems}
Table \ref{TT02}  shows that  the BDF4 scheme \eqref{bdf4} for  time-dependent peridynamic  model \eqref{modelnonlocal} with nonsymmetric indefinite   block-structured dense  systems has  the global {truncation error } $\mathcal {O}\left(\tau^4+h^{\max\left\{2,4-2\beta\right\}}\right)$  with $\delta=\mathcal{O}\left(h^\beta\right)$, $\beta\geq0$  and a computational cost of $\mathcal{O}(N \mbox{log} N)$ arithmetic operations.
\begin{table}[!th]
\vspace{-2mm}
\renewcommand{\captionfont}{\footnotesize}
\centering
\caption{ Nonsymmetric indefinite   block-structured dense  systems \eqref{eq:nonsym system}: Using  Galerkin approach    $\mathcal{A}_{k-1} = I_k^{k-1}\mathcal{A}_k I^k_{k-1}$ in \eqref{eq:block Galerkin} computed by Lemma \ref{lm3.1}  to solve the time-dependent peridynamic  model \eqref{modelnonlocal} with and $\tau=h=1/N$. }
\resizebox{\textwidth}{!}{
\begin{tabular}{cccccc|cccc }\hline\noalign{\smallskip}
           &\multirow{2}*{$N$}     &  \multicolumn{4}{c}{$\delta=1/4$}      &\multicolumn{4}{c}{$\delta=\sqrt{h}$}     \\
                          \cline{3-10}\noalign{\smallskip}
                                 &                 &Error             &Rate          &CPU         &Iter       &Error           &Rate       &CPU      &Iter     \\\hline
   \multirow{5}*{BDF4} & $2^5$      &4.3254e-05    &~~~~       &0.1460s    &7          &9.9042e-05  &~~~~    &0.2276s   &10              \\
                                 & $2^6$      &2.7166e-06    &3.9930       &0.3628s    &6          &9.3791e-06  &3.4005    &0.4737s   &8               \\
                                 & $2^7$      &1.7022e-07    &3.9963       &0.6684s    &5          &1.1886e-06  &2.9802    &1.5029s   &8                \\
                                 & $2^8$      &1.0652e-08    &3.9981       &2.0338s    &4          &1.3694e-07  &3.1177    &3.5816s   &7                 \\ \hline
%
% \multirow{5}*{BDF5}& $2^5$      &4.4392e-05    &~~~~       &0.2266s    &8          &1.0006e-04  &~~~~    &0.2395s   &11              \\
%                             & $2^6$      &2.7920e-06    &3.9909       &0.5140s    &7          &9.4434e-06  &3.4054    &0.6970s   &10               \\
%                             & $2^7$      &1.7504e-07    &3.9955       &1.0154s    &6          &1.1925e-06  &2.9853    &1.6234s   &10                \\
%                             & $2^8$      &1.0957e-08    &3.9978       &2.5448s    &5          &1.3718e-07  &3.1199    &4.3041s   &9                 \\\hline

\end{tabular}
}
\label{TT02}
\end{table}

\subsubsection{Symmetric positive definite   block-structured dense  systems}
Table \ref{TT01}  shows that  the BDF4 scheme \eqref{bdf4} for  time-dependent peridynamic  model \eqref{modelnonlocal} with symmetric    block-structured dense  systems has  the global truncation error  $\mathcal {O}\left(\tau^4+h^{\max\left\{2,4-2\beta\right\}}\right)$  with $\delta=\mathcal{O}\left(h^\beta\right)$, $\beta\geq0$  and a computational cost of $\mathcal{O}(N \mbox{log} N)$ arithmetic operations.

\begin{table}[!th]
\vspace{-2mm}
\renewcommand{\captionfont}{\footnotesize}
\centering
\caption{Symmetric positive definite   block-structured dense  systems \eqref{eq: system1}: Using  Galerkin approach    $\mathcal{A}_{k-1} = I_k^{k-1}\mathcal{A}_k I^k_{k-1}$ in \eqref{eq:block Galerkin} computed by Lemma \ref{lm3.1}  to solve the time-dependent peridynamic  model \eqref{modelnonlocal} with and $\tau=h=1/N$.}
\resizebox{\textwidth}{!}{
\begin{tabular}{cccccc|cccc }\hline\noalign{\smallskip}
           &\multirow{2}*{$N$}      & \multicolumn{4}{c}{$\delta=1/4$}      &\multicolumn{4}{c}{$\delta=\sqrt{h}$}     \\
                          \cline{3-10}\noalign{\smallskip}
                               &               &Error             &Rate          &CPU         &Iter        &Error           &Rate       &CPU      &Iter          \\\hline
 \multirow{5}*{BDF4} & $2^5$      &1.1628e-05    &~~~~       &0.2341s    &9          &2.5257e-05  &~~~~    &0.2987s   &12            \\
                              & $2^6$      &7.3840e-07    &3.9771       &0.4819s    &7          &2.3810e-06  &3.4070    &0.5999s   &10                \\
                              & $2^7$      &4.6514e-08    &3.9887       &1.0638s    &6          &2.9930e-07  &2.9919    &2.0004s   &10                 \\
                              & $2^8$      &2.9182e-09    &3.9945       &2.6782s    &5          &3.4366e-08  &3.1226    &4.6659s   &9                 \\ \hline
%
%\multirow{5}*{BDF5} & $2^5$      &1.0494e-05    &~~~~       &0.3518s    &9          &2.4244e-05  &~~~~    &0.2569s   &11            \\
%                             & $2^6$      &6.6338e-07    &3.9836       &0.5620s    &7          &2.3169e-06  &3.3874    &0.6611s   &10                \\
%                             & $2^7$      &4.1697e-08    &3.9918       &1.0560s    &6          &2.9539e-07  &2.9715    &2.0111s   &10                 \\
%                             & $2^8$      &2.6137e-09    &3.9958       &2.4178s    &5          &3.4127e-08  &3.1136    &5.7403s   &9                  \\\hline
\end{tabular}
}
\label{TT01}
\end{table}

\section{Conclusions}
In this paper, we considered the solutions of block-structured dense and Toeplitz-like-plus-Cross    systems  arising from  nonlocal diffusion  problem and peridynamic problem.
We designed a AMG for block-structured dense and Toeplitz-like-plus-Cross systems, by making also use of fast Fourier transforms,
and we provided an estimate of the TGM convergence rate for the peridynamic  problem with symmetric positive definite block-structured dense linear  systems.
In this specific context, we answered the question on how to define coarsening and interpolation operators, when the stiffness matrix leads to nonsymmetric  systems \cite{Bre2010aggregation,MaNs:19}.
The simple (traditional) restriction operator and prolongation operator are employed for such  Toeplitz-like-plus-Cross systems, so that the entries of the sequence of subsystems are explicitly determined on different levels.

For the future, at least two questions arise and we plan to investigate them: more precisely we would like to consider the study of the TGM convergence analysis for nonsymmetric  block-structured dense  systems and the analysis of the full MGM for symmetric block-structured dense systems, based on the ideas presented in \cite{CES:20,ChenDeng2017}.

\end{document}